\documentclass[11pt,a4paper,reqno]{amsart}
\usepackage{titlesec}
\usepackage{mathrsfs}
\usepackage{amsmath,amssymb}
\usepackage{amsfonts}
\usepackage{latexsym,bm}
\usepackage{amsthm}
\usepackage[normalem]{ulem}
\usepackage{cite}
\usepackage{amssymb}
\usepackage{amssymb,amscd}
\usepackage{amsbsy}
\usepackage{fancyhdr,graphicx}
\usepackage{indentfirst}
\usepackage{graphics,color}
\usepackage{epsfig}
\usepackage{xcolor}
\usepackage{overpic}
\usepackage{caption}
\usepackage{subcaption}
\usepackage{multirow}
\usepackage{diagbox}
\usepackage{array}
\usepackage{makecell}
\usepackage{arydshln}
\usepackage{booktabs}
\usepackage{threeparttable}
\usepackage{tikz} 
\usetikzlibrary{patterns}
\usepackage{hyperref}

\usepackage{enumerate}
\titleformat{\section}{\normalfont\Large\bfseries}{\thesection}{1em}{}
\titleformat{\subsection}{\normalfont\large\bfseries}{\thesubsection}{0.5em}{}
\titleformat{\subsubsection}{\normalfont\normalsize\bfseries}{\thesubsubsection}{0.5em}{}

\newtheorem {theorem} {Theorem}
\newtheorem {proposition} {Proposition}
\newtheorem {corollary} {Corollary}
\newtheorem {lemma}  {Lemma}

\newtheorem {remark} {Remark}
\newtheorem {problem} {Problem}
\theoremstyle{remark}
\theoremstyle{definition}

\DeclareMathOperator{\sgn}{sgn}

\subjclass[2020]{Primary: 34C25; Secondary: 34A34, 37C27, 37G15}
\keywords{Abel equations; limit cycles; Half-period convexity; Rigid systems; Quadratic systems}
\newpage
\makeatletter
\def\@settitle{\begin{center}\normalfont\Large\bfseries\@title\end{center}\par\vskip 0.5em}
\makeatother
\begin{document}

\title[Limit cycles of Abel equations]
{
Uniqueness of Limit Cycles for Abel Equations and Planar Systems}

\author[J. L. Bravo]{José Luis Bravo Trinidad}
\address{Departamento de Matem\'{a}ticas, Universidad de Extremadura, 06006 Badajoz, Spain}
\email{trinidad@unex.es}

\author[R. Tian]{Renhao Tian}
\address{School of Mathematics (Zhuhai), Sun Yat-sen University, Zhuhai 519082, P. R. China}
\email{tianrh5@mail.sysu.edu.cn}

\author[Y. Zhao]{Yulin Zhao}
\address{School of Mathematics (Zhuhai), Sun Yat-sen University, Zhuhai 519082, P. R. China}
\email{mcszyl@mail.sysu.edu.cn}

\begin{abstract}
We study positive limit cycles of the Abel equation
$$\dot{x}=A(t)x^{3}+B(t)x^{2},$$
where $A$ and $B$ are $2\pi$-periodic functions. 
By developing a half-period convexity method, we establish a new criterion ensuring that the equation has at most one positive limit cycle.
Moreover, if such a limit cycle exists, it is hyperbolic.
As applications, we consider two classes of planar polynomial systems that can be reduced to Abel equations.
For a subfamily of cubic rigid systems, we prove that there is at most one limit cycle, thereby giving a partial answer to the first problem in Gasull's list of 33 open problems.
For quadratic systems, we derive a new uniqueness criterion for limit cycles surrounding the origin and provide explicit parameter regions under which the criterion is satisfied.
\end{abstract}

\date{}
\maketitle
\section{Introduction and Main Results}
The study of periodic solutions of Abel equations occupies a special place in the qualitative theory of differential equations, particularly in connection with the second part of Hilbert's 16th problem.
Indeed, by means of suitable changes of variables, the study of limit cycles of several classes of planar polynomial systems can be reduced to the study of periodic solutions of Abel equations.
Classical examples include quadratic systems \cite{Cherkas1977,Lins}, cubic systems \cite{DevlinLloydPearson1998}, rigid systems \cite{Gasull2021,GasullProhensTorregrosa2005}, and systems with homogeneous
or quasi-homogeneous nonlinearities \cite{CarbonellLlibre1988,CollGasullProhens1997,HuangLiang2018,HuangLiangLlibre2018}.
A systematic study of polynomial planar systems that can be reduced to Abel equations was recently developed in \cite{AlvarezBravoSanchez2022}.

Motivated by these connections, in this paper we consider the Abel equation
\begin{equation}\label{two-term-Abel}
    \dot{x}=A(t)x^3+B(t)x^2,
\end{equation}
where $t,x\in\mathbb{R}$ and $A$, $B$ are $2\pi$-periodic functions.
For $t_0,x_0\in\mathbb{R}$, let $x(t;x_0)$ denote the solution of equation \eqref{two-term-Abel} satisfying $x(t_0;x_0)=x_0$.
The solution $x(t;x_0)$ is called \textit{a $2\pi$-periodic solution} (or simply \textit{a periodic solution}) if it is defined for every $t\in\mathbb{R}$ and satisfies $x(t_0+2\pi;x_0)=x_0$ (therefore, it satisfies $x(t+2\pi;x_0)=x(t;x_0)$ for every $t\in\mathbb{R}$).
Clearly, $x=0$ is always a periodic solution of \eqref{two-term-Abel}.
As usual, an isolated periodic solution will be called a \textit{limit cycle}.
We are interested in the number of positive limit cycles, namely, isolated periodic solutions satisfying $x(t;x_{0})>0$.
 
The problem of determining the number of limit cycles of Abel equations has been extensively studied. 
Nevertheless, even for relatively simple families of periodic coefficients, 
obtaining effective upper bounds remains a difficult problem. 
In fact, Lins Neto \cite{Lins} proved that,
for every $k\in\mathbb N$, 
there exist periodic functions $A$ and $B$ such that equation \eqref{two-term-Abel} has at least $k$ limit cycles.
This shows that no uniform finite bound exists over the full class of periodic coefficients. 
Therefore, meaningful upper bound problems require either restrictions on the complexity of the coefficients or additional structural assumptions.

A number of classical upper bound criteria are based on sign conditions on the coefficients. 
In particular, Pliss~\cite{Pliss} and Gasull and Llibre \cite{GL} obtained classical bounds for Abel equations when one of the coefficients $A$ or $B$ has a fixed sign.
A significant extension in this direction was given by \'{A}lvarez, Gasull and Giacomini \cite{AGG}, 
who proved that if there exists a nonzero vector $(\lambda,\mu)\in\mathbb R^2$ such that $\lambda A(t)+\mu B(t)$ does not change sign and is not identically zero, 
then \eqref{two-term-Abel} has at most one nonzero limit cycle. 
More recently, Huang, Tian and Zhao \cite{Huang2026} obtained a different criterion based on Chebyshev theory. 
More precisely, if $A$ and $B$ belong to the linear span of an ET-system of dimension three containing a positive function, 
then  equation \eqref{two-term-Abel} has at most two nonzero limit cycles, counted with multiplicity.
Other approaches based on symmetries, geometric criteria, invariant curves, and special configurations of sign-changing coefficients have also been developed \cite{AlvarezBravoFernandez2013,AlvarezBravoFernandez2015,HuangLiang2017,HuangLiang2020,HuangZhao2012,BravoCalderonFernandez2021,YuChenLiu2024,Yu2024}.

Despite these developments, the direct application of the existing criteria to planar polynomial systems remains rather limited.
Indeed, for many Abel equations arising from planar systems, the coefficient functions themselves change sign over one period, and there need not exist a nontrivial constant linear combination of them having a fixed sign.
Moreover, these coefficients may have several zeros, which also makes criteria based on low-dimensional Chebyshev theory difficult to apply.
This phenomenon already appears in classical reductions of rigid and quadratic systems to Abel equations.

These observations motivate us to develop a different approach, which we call \textit{the half-period convexity method}. 
The main idea is to rewrite the stability condition for a positive periodic solution as a comparison problem on a single half-period and to exploit the convexity or concavity of the quotient $A/B$ after a suitable reparametrization of the independent variable.
This leads to a new criterion for the uniqueness of positive limit cycles of \eqref{two-term-Abel}.

Our main result is the following.

\begin{theorem}\label{criterion}
Consider the Abel equation \eqref{two-term-Abel}, where $A, B\in C(\mathbb{R})$  are $2\pi$-periodic. 
Assume that
\begin{enumerate}
\item[(i)] $A(t+\pi)=A(t)$, $B(t+\pi)=-B(t)$, and
      $B(t)>0$ for $t\in(0,\pi)$;
\item[(ii)] $A(t)$ has exactly two zeros $t_1<t_2$ in $(0,\pi)$;
\item[(iii)] Set
      $$
      z(t)=\int_0^t B(s)\,ds,\qquad K(t)=\frac{A(t)}{B(t)},\quad t\in(0,\pi).
      $$
      Since $B(t)>0$ on $(0,\pi)$, the function $z$ is strictly increasing.
    Let $t=t(z)$ denote its inverse and define
      $$
      \widehat K(z):=K(t(z))\qquad z\in\bigl(z(0),z(\pi)\bigr).
      $$
    One of the following alternatives holds:
      \begin{enumerate}
      \item[(iii.1)] $A<0$ on $(t_1,t_2)$ and $A>0$ on $(0,t_1)\cup(t_2,\pi)$, and $\widehat K$ is strictly convex on $\left(z(0),z(\pi)\right)$;
      \item[(iii.2)] $A>0$ on $(t_1,t_2)$, $A<0$ on $(0,t_1)\cup(t_2,\pi)$, and $\widehat K$ is strictly concave on $\left(z(0),z(\pi)\right)$.
      \end{enumerate}
\end{enumerate}
Then equation \eqref{two-term-Abel} has at most one positive limit cycle.
If such a limit cycle exists, it is hyperbolic;
moreover, it is unstable under \textup{(iii.1)} and stable under \textup{(iii.2)}. 
\end{theorem}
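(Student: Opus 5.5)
The plan is to show that every positive $2\pi$-periodic solution is hyperbolic and has the same stability: unstable under (iii.1), stable under (iii.2). At most one positive limit cycle then follows by the usual ordering argument. Positive solutions of \eqref{two-term-Abel} are ordered, so between two consecutive positive periodic solutions the return map minus the identity has constant sign. Hence two consecutive ones, if both hyperbolic, must have opposite stabilities. A continuum of periodic solutions is also excluded, since each would be hyperbolic and hence isolated.

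\textbf{Step 1: reducing the stability sign to a single integral.} Let $x(t)>0$ be $2\pi$-periodic. Its characteristic exponent is $\int_0^{2\pi}(3Ax^2+2Bx)\,dt$. Since $(\ln x)'=Ax^2+Bx$ integrates to zero over a period, this exponent equals
$$\int_0^{2\pi}A x^2\,dt=-\int_0^{2\pi}B x\,dt .$$
So it suffices to prove that, under (iii.1), this common value is strictly positive; case (iii.2) is the mirror image.

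\textbf{Step 2: folding onto one half-period.} Using (i), set $w(t)=x(t+\pi)$ on $[0,\pi]$. Then $w'=Aw^3-Bw^2$, with boundary conditions $w(0)=x(\pi)$ and $w(\pi)=x(0)$. Next pass to the variable $z$ of (iii), so $dz=B\,dt$ and $B>0$ on $(0,\pi)$. The two half-period pieces become solutions of
$$\frac{dx}{dz}=\widehat K x^3+x^2,\qquad \frac{dw}{dz}=\widehat K w^3-w^2$$
on $(z(0),z(\pi))$. The exponent becomes
$$\int \widehat K\,(x^2+w^2)\,dz=-\int (x-w)\,dz .$$
It also helps to use $u=1/x$ and $v=1/w$, which satisfy $u\,u_z=-\widehat K-u$ and $v\,v_z=-\widehat K+v$. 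This gives further exact identities, such as $\int \widehat K(u^{-2}\ldots)$ and weighted versions obtained by multiplying $(\ln x)_z$ and $(\ln w)_z$ by affine functions $\alpha+\beta z$ and integrating by parts. The boundary terms cancel thanks to the crossed boundary conditions.

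\textbf{Step 3: the convexity argument (the main obstacle).} By (ii)--(iii.1), $\widehat K$ is strictly convex, negative exactly on $(z_1,z_2)$ with $z_i=z(t_i)$, and positive outside. The aim is to choose $\alpha,\beta$ in the weighted identities of Step 2 so that the exponent equals $\int(\widehat K-\ell)\,\Phi\,dz$, where $\ell$ is an affine function and $\Phi\ge0$. Natural choices are $\ell$ a supporting line or the zero chord through $z_1,z_2$, and $\Phi$ built from $x^2+w^2$ and the ordering of $x$ versus $w$. Alternatively, after a second integration by parts, the exponent should take the form $\int \widehat K''\,\Psi\,dz$ with $\Psi\ge0$ (interpreting $\widehat K''$ as a positive measure, since only continuity is assumed).

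Strict convexity then gives strict positivity. The sign information needed for $\Phi$ or $\Psi$ comes from comparing $x$ and $w$: $x-w$ solves a linear equation, so it can change sign at most in a controlled way, and it has opposite signs at the two endpoints by the crossed boundary conditions. Finding the correct multiplier, so that all non-sign-definite terms cancel, is the delicate part. I would first try weights matching the two zeros $z_1,z_2$, i.e.\ $\ell\equiv0$, and check positivity separately on $(z_1,z_2)$ and on its complement.
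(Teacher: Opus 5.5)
Your Steps 1 and 2 are correct: $\int_0^{2\pi}(3Ax^2+2Bx)\,dt=\int_0^{2\pi}Ax^2\,dt$, the folding $w(t)=x(t+\pi)$ with crossed boundary values, and the passage to $z$. So is the ordering argument that turns ``every positive periodic solution is hyperbolic and unstable'' into uniqueness.

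The gap is Step~3, which is the entire content of the theorem and is not carried out. You describe the shape an identity might have ($\int(\widehat K-\ell)\Phi\,dz$ or $\int\widehat K''\Psi\,dz$ with nonnegative weights) but never exhibit $\ell$, $\Phi$ or $\Psi$. So strict convexity of $\widehat K$ is never used, and nothing shows such a weight exists. Your suggested first try cannot work as stated. With $\ell\equiv0$ and $\Phi\ge0$ the integrand is nonpositive on $(z_1,z_2)$, so checking the two regions separately gives no sign. The negative contribution must be compared against the positive one, and convexity must decide that comparison. Likewise, knowing that $x-w$ vanishes at most once is too weak. What is needed is the identity $(u^2-v^2)_z=-2(u+v)$, which follows at once from your equations for $uu_z$ and $vv_z$.

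The paper supplies two missing ingredients. The first is a second exact identity. Put $r:=u+v$; it satisfies $(\ln r)_z=-\widehat K\,xw$ and takes equal values at both ends. Hence $\int\widehat K\,xw\,dz=0$, and the exponent equals $\int\widehat K\,(x+w)^2\,dz$ (Proposition~\ref{prop:stability1}).

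The second is the comparison mechanism. View $r(t)=1/x(t)+1/x(t+\pi)$ as a $\pi$-periodic function of $t$. Since $r'=-A\,r\,x(t)x(t+\pi)$, it is strictly monotone on the arcs $[t_1,t_2]$ and $[t_2,t_1+\pi]$, where $A$ has fixed sign. Let $\tau_1,\tau_2$ be its inverses on these arcs and $r_i=r(t_i)$. Using $r$ as independent variable pairs each point $\tau_1(r)$, where $A<0$, with the point $\tau_2(r)$, where $A>0$, at the same level of $r$. The exponent becomes $\int_{r_1}^{r_2}4rUV/\bigl((r^2-R^2(\tau_2))(r^2-R^2(\tau_1))\bigr)\,dr$, with positive denominator. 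Here $R(t)=1/x(t)-1/x(t+\pi)$, $U=R(\tau_2)-R(\tau_1)$ and $V=R(\tau_2)+R(\tau_1)$.

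One then proves $U<0$ and $V<0$. On part of $(r_1,r_2)$ this follows directly from the strict monotonicity of $rR=1/x(t)^2-1/x(t+\pi)^2$. On the rest one looks at an extreme zero, where the sign of $U'$ or $V'$ is dictated by $K(\tau_1)\pm K(\tau_2)$, the signs of $K(\tau_i)$ being known.

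Convexity enters only there, via Lemma~\ref{lemma2}. In your notation it says: if $u(\zeta_1)=v(\zeta_2)$ and $u(\zeta_2)=v(\zeta_1)$ with $\zeta_1<\zeta_2$, then $\widehat K(\zeta_1)+\widehat K(\zeta_2)>0$. Otherwise strict convexity gives $\widehat K(z)+\widehat K(\zeta_1+\zeta_2-z)<\widehat K(\zeta_1)+\widehat K(\zeta_2)\le0$ on $(\zeta_1,\zeta_2)$. That would make $u(z)-v(\zeta_1+\zeta_2-z)$, times a positive integrating factor, strictly increasing while vanishing at both endpoints. Without an argument of this kind, your proposal is a correct reformulation, not a proof.
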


We next illustrate the applicability of Theorem~\ref{criterion} through two different classes of planar polynomial systems whose limit cycle problems can be reduced to Abel equations.

As a first application, we consider planar polynomial rigid systems, introduced by Conti \cite{Conti1994}.
After a linear change of variables and a rescaling of time, such systems can be written as
\begin{equation}\label{rigid equation0}
\begin{cases}
\dot{u}=-v+u\left(\sum_{j=0}^{n}F_j(u,v)\right),\\[10pt]
\dot{v}=\phantom{-}u+v\left(\sum_{j=0}^{n}F_j(u,v)\right),
\end{cases}
\end{equation}
where $(u,v)\in\mathbb{R}^2$, and each $F_j$ is a homogeneous polynomial of degree $j$.
Introducing polar coordinates, we transform the above system into the following Abel-type equation
\begin{equation*}
\frac{dr}{d\theta}
=
\sum_{j=0}^{n}F_j(\cos\theta,\sin\theta)r^{j+1}.    
\end{equation*}
Thus, the limit cycle problem for system \eqref{rigid equation0} is naturally reduced to the study of positive limit cycles of the above equation.

It is already known that when $n=0,1$, system \eqref{rigid equation0} has no limit cycles; see \cite{Gasull2021}.
Hence, the first nontrivial case for the limit cycle problem is $n=2$.
The corresponding cubic rigid system can be written as
\begin{equation}\label{rigid equation1}
\begin{cases}
\dot u=-v+u(a+bu+cv+du^2+euv+fv^2),\\
\dot v=\phantom{-}u+v(a+bu+cv+du^2+euv+fv^2),
\end{cases}
\end{equation}
where $d^2+e^2+f^2\neq0$.
Its radial equation is
\begin{equation}\label{Abel0}
\frac{dr}{d\theta}=ar+(b\cos\theta+c\sin\theta)r^2+(d\cos^2\theta+e\cos\theta\sin\theta+f\sin^2\theta)r^3.
\end{equation}
The number of limit cycles of system \eqref{rigid equation1} is not known in general. 
Gasull, Prohens and Torregrosa \cite{GasullProhensTorregrosa2005} proved, by means of a degenerate Andronov-Hopf bifurcation, that two limit cycles can occur.
They also established uniqueness when $\Delta:=e^2-4df\leq0$.
Further uniqueness results were obtained by Gasull and Torregrosa \cite{GasullTorregrosa2005} for the subfamily $b=c=0$
and more recently by Gon\c{c}alves et al. \cite{GoncalvesEtAl2026} in a small parameter regime, and a non-existence result was obtained by  Bravo and Torregrosa~\cite{BT} for the subfamily $a=d=f=0$.

In 2021, Gasull \cite{Gasull2021} proposed a list of 33 open problems in low-dimensional dynamical systems, 
the first of which concerns the maximum number of limit cycles of cubic rigid systems.
In the present notation, the problem can be stated as follows.

\begin{problem}
Is 2 the maximum number of limit cycles of system \eqref{rigid equation1}?
\end{problem}
We provide a partial answer to this problem by considering the natural subfamily $a=0$. 
In this case, equation \eqref{Abel0} reduces to
\begin{equation}\label{Abel1}
\frac{dr}{d\theta}=(b\cos\theta+c\sin\theta)r^2+(d\cos^2\theta+e\cos\theta\sin\theta+f\sin^2\theta)r^3,
\end{equation}
which is precisely of the form \eqref{two-term-Abel}.
Moreover, its coefficients possess the half-period structure required by Theorem~\ref{criterion}. 
By applying our criterion, together with the necessary parameter reductions, we obtain the following result.

\begin{theorem}\label{mainresult-rigid}
For $a=0$, the cubic rigid system \eqref{rigid equation1} has at most one limit cycle. 
Moreover, this upper bound is sharp.
\end{theorem}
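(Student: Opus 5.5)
We work throughout with equation \eqref{Abel1}, with $A(\theta)=d\cos^2\theta+e\cos\theta\sin\theta+f\sin^2\theta$ and $B(\theta)=b\cos\theta+c\sin\theta$. Since the system is rigid, limit cycles of \eqref{rigid equation1} correspond exactly to positive limit cycles of \eqref{Abel1}. The symmetry conditions $A(\theta+\pi)=A(\theta)$ and $B(\theta+\pi)=-B(\theta)$ hold automatically, so the plan is to normalize the parameters until the remaining hypotheses of Theorem~\ref{criterion} can be checked directly, and to treat the degenerate cases by hand.

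\textbf{Normalization.} If $b=c=0$, then $dr/d\theta=A r^3$. Its solutions satisfy $r^{-2}(\theta)=r_0^{-2}-2\int_0^\theta A$. Hence either every solution near $0$ is periodic (when $\int_0^{2\pi}A=0$) or there is no periodic orbit at all, so there are no limit cycles. If $(b,c)\neq(0,0)$, a rotation $\theta\mapsto\theta+\theta_0$ gives $B=\rho\sin\theta$ with $\rho>0$, and scaling $r$ gives $\rho=1$. The rotation keeps $A$ a quadratic form in $(\cos\theta,\sin\theta)$, and $B>0$ on $(0,\pi)$. Now $z(\theta)=1-\cos\theta$, so $\cos\theta=1-z=:w\in(-1,1)$. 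Writing $A=\alpha+\beta\cos2\theta+\gamma\sin2\theta$ gives
\[
\widehat K=\frac{\alpha+\beta(2w^2-1)}{\sqrt{1-w^2}}+2\gamma w .
\]
The term $2\gamma w$ is linear in $w$, so it does not affect convexity. Thus $\widehat K$ is strictly convex (respectively concave) exactly when $g(w)=(p+qw^2)/\sqrt{1-w^2}$ is, where $p=\alpha-\beta$ and $q=2\beta$.

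\textbf{Convexity computation.} Differentiate $g$ twice. One expects
\[
g''(w)=\frac{p(1+2w^2)+q(2+w^2-\ldots)}{(1-w^2)^{5/2}},
\]
and the numerator works out to a polynomial in $w^2$, something like $p(1+2w^2)+q(2-w^2+2w^4)$. We then need its sign on $[0,1)$. The zero-count hypotheses (ii) and (iii) of Theorem~\ref{criterion} translate into sign conditions on $A(\theta)$ for $\theta\in(0,\pi)$. For example, case (iii.1) requires $A>0$ near $\theta=0,\pi$ (i.e.\ $w=\pm1$) and $A<0$ in between. We will check that this pattern forces a definite sign of the numerator. The argument should use that, after subtracting the linear part, $A/B$ near $w=\pm1$ behaves like $(p+q)/\sqrt{1-w^2}$, which blows up to $+\infty$ in case (iii.1).

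\textbf{Remaining cases.} Several configurations do not fit the pattern of (ii) and (iii.1)/(iii.2):
\begin{itemize}
\item $A$ has fewer than two zeros in $(0,\pi)$, or has double zeros.
\item $A$ vanishes at $\theta=0$.
\item The sign pattern of $A$ is reversed relative to the two alternatives.
\item $A$ has a fixed sign, or $\Delta\le0$.
\end{itemize}
In these cases some combination $\lambda A+\mu B$ keeps its sign, or $A$ does not change sign. There we invoke the Álvarez--Gasull--Giacomini criterion \cite{AGG} or the result of \cite{GasullProhensTorregrosa2005} for $\Delta\le 0$ to get at most one limit cycle.

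\textbf{Main obstacle.} The key step is the case analysis showing that, whenever $\Delta>0$ and $A$ changes sign twice on $(0,\pi)$ after the rotation, the convexity direction of $g$ automatically matches the sign pattern of $A$, rather than being mixed on $(-1,1)$. I would try to factor the numerator of $g''$ and use that $p+q$ and $p$ have the signs dictated by the values of $A$ at $w=\pm1$ and $w=0$.

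\textbf{Sharpness.} We exhibit parameters with exactly one limit cycle. For example, take $d,f$ of opposite sign and perturb from a center: starting with $b=1$, $c=0$, choose $d,e,f$ such that the first nonzero Lyapunov-type constant at $r=0$ has the sign opposite to the behavior at infinity. One could also use the degenerate Hopf bifurcation of \cite{GasullProhensTorregrosa2005} restricted to $a=0$, where one cycle bifurcates from the origin. Uniqueness then follows from the first part.
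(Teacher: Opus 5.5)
The normalization and the plan to verify (iii.1)/(iii.2) directly are fine. The gap is that the step you single out as the main obstacle is false, and your fallback for the leftover configurations is vacuous, so a whole parameter region has no argument.

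First, the computation gives $g''(w)=\bigl((p+2q)+(2p+q)w^2\bigr)(1-w^2)^{-5/2}$. Hence $\widehat K''$ has the sign of $\alpha+3\beta+2\alpha w^2$, which is not a quartic. The sign pattern of (iii.1) only gives $A(0)=\alpha+\beta>0$, i.e. positivity near $w=\pm1$. The value $\alpha+3\beta$ at $w=0$ is not controlled by the zeros of $A$. Neither is $p=A(\pi/2)$, because $\gamma$ moves the zeros.

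Take $b=0$, $c=1$, $d=1$, $e=4$, $f=3$. Then $(\alpha,\beta,\gamma)=(2,-1,2)$ and $A=2-\cos2\theta+2\sin2\theta$. Here $\Delta=4>0$ and $A(0)=1$. Also $A$ has exactly two simple zeros $3\pi/4<\pi-\arctan(1/3)$ in $(0,\pi)$, with $A<0$ between them. Yet $\widehat K''$ has the sign of $4w^2-1$, which is negative for $|w|<1/2$. So neither alternative of Theorem~\ref{criterion} holds whenever $\alpha+\beta$ and $\alpha+3\beta$ have opposite signs. The cases $\alpha+\beta=0\neq\gamma$ (where $A(0)=0$ and (ii) fails) and $\alpha=\beta=0$ are also uncovered.

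The criterion of \cite{AGG} cannot fill this gap. If $\lambda A+\mu B\ge0$ on $\mathbb R$, adding the inequalities at $\theta$ and $\theta+\pi$ gives $\lambda A\ge0$. So a sign-definite combination exists only when $A$ itself has a fixed sign, i.e. $\Delta\le0$, and that case is already covered by \cite{Pliss,GL}.

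What you are missing is a nonexistence result. The bad set $\alpha+\beta>0>\alpha+3\beta$ forces $\alpha>0>\beta$ and $|\beta|<|\alpha|$ (and symmetrically with signs reversed). The paper proves that for $|\alpha|\ge|\beta|$, $(\alpha,\beta)\neq(0,0)$, there is no positive periodic solution at all. Suppose $r$ is one. Then $\widetilde r(\theta)=r(2\pi-\theta)$ is a positive periodic solution of the equation with the same $B=\sin\theta$ and with $A$ replaced by $\widetilde A(\theta)=-A(2\pi-\theta)$. Since $A-\widetilde A=2\alpha+2\beta\cos2\theta$ has a fixed sign and is not identically zero, strict comparison from the common value $r(0)=\widetilde r(0)$ gives $r(2\pi)\neq\widetilde r(2\pi)=r(0)$, a contradiction. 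The case $\alpha=\beta=0$ is a center by the reversibility $\theta\mapsto-\theta$.

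With this in hand, your convexity computation does close the argument. On $\{\alpha+\beta>0,\ \alpha+3\beta\ge0\}$ the numerator $\alpha+3\beta+2\alpha w^2$ is positive on $(-1,1)$ except possibly at $w=0$. So (iii.1) holds whenever $A$ changes sign. The paper, by contrast, applies Theorem~\ref{criterion} only on $\alpha<0<\alpha+\beta$ and disposes of $\alpha\ge0$ by nonexistence.

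For sharpness, you need the actual Lyapunov quantities at $a=0$ rather than a perturbation from a center or a comparison with infinity. In the normal form, $D(r)=2\pi\alpha r^3+2\pi\alpha r^4+\bigl(6\pi^2\alpha^2+3\pi\alpha+\tfrac{\pi}{2}\beta\bigr)r^5+O(r^6)$. Fix $\beta\neq0$ and $\alpha=0$, then take $|\alpha|$ small with $\alpha\beta<0$; this produces one small hyperbolic cycle.
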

Therefore, Theorem~\ref{mainresult-rigid} gives a partial answer to the first problem in \cite{Gasull2021}.

\medskip 

As a second application, we consider quadratic systems with an equilibrium point whose linearization is a center, which can be written in the form
\begin{equation}\label{quadratic-system}
\begin{cases}
\dot x=-y+P_2(x,y),\\
\dot y=\phantom{-}x+Q_2(x,y),
\end{cases}
\end{equation}
where $P_2$ and $Q_2$ are homogeneous polynomials of degree two. As we will show later, limit cycles of \eqref{quadratic-system} surrounding the origin are in one-to-one correspondence with positive limit cycles of the Abel equation
\begin{equation}\label{quadratic-Abel}
\rho'=A(\theta)\rho^3+B(\theta)\rho^2,
\end{equation}
where $A,B$ are homogenous trigonometric polynomials of degrees $6$ and $3$, respectively, $B(0)=0$, and $B(\theta)>0$ for sufficiently small $\theta>0$.

This correspondence allows us to translate Theorem~\ref{criterion} into a criterion stated directly in terms of the quadratic system.
Although this criterion can be expressed entirely in terms of the coefficients of $P_2$ and $Q_2$, the resulting algebraic conditions are rather cumbersome. 
Instead, its formulation in terms of the functions $A$ and $B$ above provides a convenient way to verify the conditions of Theorem~\ref{criterion}. 
For later reference, define
\begin{equation*}
\mathcal C(\theta)
=
B(\theta)^2A''(\theta)
-A(\theta)B(\theta)B''(\theta)
-3B(\theta)A'(\theta)B'(\theta)
+3A(\theta)(B'(\theta))^2.    
\end{equation*}
We then obtain the following result.
\begin{theorem}\label{quadratic-criterion}
Assume that 
\begin{enumerate}
\item[(i)]
$B(\theta)>0$ for
$\theta\in(0,\pi)$;
\item[(ii)]
$A$ has exactly two simple zeros in $[0,\pi]$;
\item[(iii)]$A(0)\mathcal C(\theta)>0$ for $\theta\in(0,\pi)$.
\end{enumerate}
Then system \eqref{quadratic-system} has at most one limit cycle surrounding the origin. 
If it exists, it is hyperbolic. 
Moreover, it is unstable when $A(0)>0$, and stable when $A(0)<0$.
\end{theorem}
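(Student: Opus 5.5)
Theorem~\ref{quadratic-criterion} will follow from Theorem~\ref{criterion} applied to equation \eqref{quadratic-Abel}, using the correspondence (announced above) between limit cycles of \eqref{quadratic-system} surrounding the origin and positive limit cycles of \eqref{quadratic-Abel}. The plan is to check hypotheses (i)--(iii) of Theorem~\ref{criterion} one by one. The half-period symmetry is automatic: $A$ and $B$ are homogeneous trigonometric polynomials of degrees $6$ and $3$, that is, $A(\theta)=\tilde A(\cos\theta,\sin\theta)$ and $B(\theta)=\tilde B(\cos\theta,\sin\theta)$ with $\tilde A,\tilde B$ homogeneous. Hence $A(\theta+\pi)=A(\theta)$ and $B(\theta+\pi)=-B(\theta)$. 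Together with assumption (i), this gives hypothesis (i) of Theorem~\ref{criterion}. Since $B(0)=0$, we also have $z(0)=0$, and $z$ is strictly increasing on $(0,\pi)$.

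Next we translate the convexity condition. Write $K=A/B$ and $\widehat K(z)=K(t(z))$, so that $dz/d\theta=B$. By the chain rule, $\widehat K'(z)=K'/B$ and
\[
\widehat K''(z)=\frac{1}{B}\Bigl(\frac{K'}{B}\Bigr)'=\frac{K''B-K'B'}{B^{3}}.
\]
Substituting $K'=(A'B-AB')/B^2$ and simplifying should give
\[
\widehat K''(z)=\frac{\mathcal C(\theta)}{B(\theta)^{5}}.
\]
This is a routine but central calculation, and the shape of $\mathcal C$ (in particular the coefficient $3$ in front of $A(B')^2$) matches this identity. Since $B>0$ on $(0,\pi)$, $\widehat K$ is strictly convex where $\mathcal C>0$ and strictly concave where $\mathcal C<0$. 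So assumption (iii) says that $\widehat K$ is strictly convex if $A(0)>0$ and strictly concave if $A(0)<0$. Positivity of $\widehat K''$ on an open interval implies strict convexity, so no issue arises at the endpoints.

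The remaining, and more delicate, step is the sign pattern of $A$, namely hypotheses (ii) and (iii.1)/(iii.2). We know $A$ has exactly two simple zeros in $[0,\pi]$, and $A(0)=A(\pi)$ by $\pi$-periodicity.
\begin{itemize}
\item First we show $A(0)\neq0$. At $\theta=0$ we have $B(0)=0$, so $\mathcal C(0)=3A(0)B'(0)^2$. If $A(0)=0$, then $A(\pi)=0$ as well, so the two zeros are exactly $0$ and $\pi$ and $A$ has constant sign on $(0,\pi)$. Then $A(0)\mathcal C(\theta)=0$ for all $\theta$, which contradicts (iii). Hence $A(0)\neq0$.
\item Consequently both zeros $t_1<t_2$ lie in $(0,\pi)$. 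Because they are simple, $A$ changes sign at each of them. Therefore $A$ has the sign of $A(0)$ on $(0,t_1)\cup(t_2,\pi)$ and the opposite sign on $(t_1,t_2)$.
\end{itemize}

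Combining the cases: if $A(0)>0$ we are in case (iii.1), with $\widehat K$ convex; if $A(0)<0$ we are in case (iii.2), with $\widehat K$ concave. Theorem~\ref{criterion} then gives at most one positive limit cycle of \eqref{quadratic-Abel}, which is hyperbolic, unstable when $A(0)>0$ and stable when $A(0)<0$. The step I expect to require the most care is transferring hyperbolicity and stability back to \eqref{quadratic-system}. The change of variables behind the Abel reduction (a Cherkas-type transformation $\rho=r/(1+rQ(\theta))$, or similar) is a diffeomorphism between the relevant regions, and $\theta$ increases along orbits surrounding the origin. I would check that it preserves the multiplicity of fixed points of the Poincaré map and the orientation in the radial direction, so that hyperbolicity and the stability type pass to the limit cycle of the planar system.
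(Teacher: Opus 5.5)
Your proposal is correct and follows the paper's route: the symmetries of $A$ and $B$ from homogeneity, the identity $\widehat K''(z)=\mathcal C(\theta)/B(\theta)^5$, the sign pattern of $A$ read off from $A(0)$, and then Theorem~\ref{criterion} combined with the Cherkas correspondence. Your detour through $\mathcal C(0)$ is unnecessary, since $A(0)\neq0$ follows at once from (iii); the paper uses this fact without stating it. The transfer of hyperbolicity and stability that you flag is also taken for granted in the paper, and it holds because $\rho=r/(1+rg(\theta))$ has $r$-derivative $(1+rg)^{-2}>0$ and $1+rg(\theta)=\dot\theta>0$ on such cycles, so the two return maps are conjugate by an increasing diffeomorphism.
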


In particular, we obtain the following explicit two-parameter family.

\begin{corollary}\label{mainresult-quadratic}
Consider the quadratic system
\begin{equation}\label{explicit-quadratic}
\begin{cases}
\dot x=-y+ax^2+2bxy+ay^2,\\
\dot y=\phantom{-}x+x^2+2axy+3by^2.
\end{cases}
\end{equation}
If $|a|\leq \frac13$ and $1\leq b\leq \frac32$,
then system \eqref{explicit-quadratic} has at most one limit cycle surrounding the origin, and this upper bound is sharp.
Moreover, if such a limit cycle exists, its stability is determined by the sign of $a$.
It is stable if $a>0$; it is unstable if $a<0$.
For $a=0$, the origin is a center.
\end{corollary}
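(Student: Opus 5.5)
We verify the hypotheses of Theorem~\ref{quadratic-criterion} for system \eqref{explicit-quadratic} in the region $|a|\le\frac13$, $1\le b\le\frac32$. Sharpness and the stability statement are then handled separately.

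\textbf{Step 1: the coefficients $A$ and $B$.} We first carry out the reduction described before Theorem~\ref{quadratic-criterion}. In polar coordinates,
$$\dot r=r^2 f(\theta),\qquad \dot\theta=1+r g(\theta),$$
with
$$f=\cos\theta P_2(\cos\theta,\sin\theta)+\sin\theta Q_2(\cos\theta,\sin\theta),\qquad g=\cos\theta Q_2-\sin\theta P_2 .$$
The Cherkas transformation $\rho=r/(1+rg)$ yields the Abel equation \eqref{quadratic-Abel}, with $A$ of degree $6$ and $B$ of degree $3$, after a rotation so that $B(0)=0$ and $B>0$ just to the right of $0$. For the specific coefficients $P_2=ax^2+2bxy+ay^2$ and $Q_2=x^2+2axy+3by^2$, we compute $A$ and $B$ explicitly as trigonometric polynomials in $(a,b)$.

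We expect $A$ to contain the factor $a$, since for $a=0$ the system is reversible with respect to a line and the origin is a center. Writing $A=a\,\widetilde A$ then makes $\sgn A(0)=\sgn(a)\cdot\sgn\widetilde A(0)$. This factorization also gives the last claim of the corollary: for $a=0$ we have $A\equiv0$, so every solution near $0$ is periodic and the origin is a center.

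\textbf{Step 2: conditions (i) and (ii).} For (i), we write $B(\theta)=\sin\theta\cdot q(\theta)$, using $B(0)=0=B(\pi)$ from the half-period antisymmetry. It then suffices to show that the quadratic trigonometric factor $q$ is positive on $[0,\pi]$ for $b\in[1,\tfrac32]$, $|a|\le\frac13$. Setting $u=\tan\theta$, this reduces to checking that a quadratic form has negative discriminant.

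For (ii), we use $\tan$- or $\tan(\theta/2)$-substitution to turn $\widetilde A$ into a polynomial of degree at most $6$. We then count its real roots in the relevant range via a Sturm sequence or discriminant argument, uniformly in the parameter box. Simplicity of the two zeros follows from the nonvanishing of the resultant of $\widetilde A$ and $\widetilde A'$ on the box.

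\textbf{Step 3: condition (iii), the main obstacle.} We must show that $A(0)\,\mathcal C(\theta)>0$ on $(0,\pi)$. Since $A=a\widetilde A$, we have $\mathcal C=a\,\widetilde{\mathcal C}$, so it suffices to prove $\widetilde A(0)\,\widetilde{\mathcal C}(\theta)>0$, which is independent of the sign of $a$ apart from $a\neq0$.

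The function $\widetilde{\mathcal C}$ is a trigonometric polynomial of degree about $12$ depending on $(a,b)$. Because $B(0)=0$, $\mathcal C$ vanishes at the endpoints: there $\mathcal C=3A(B')^2$, which is nonzero. We therefore need to check the sign carefully near $\theta=0$ and $\theta=\pi$ and factor out any forced powers of $\sin\theta$.

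We then substitute $u=\tan(\theta/2)\in(0,\infty)$ and obtain a polynomial $\Pi(u;a,b)$. We try to show all its coefficients are positive on the box, possibly after the shift $u\mapsto$ positive parameters, or after writing $b=1+s$ with $s\in[0,\tfrac12]$ and $a^2=\tau\in[0,\tfrac19]$. If some coefficients fail to be positive, we fall back on Sturm sequences at vertices of the box, combined with showing that the discriminant of $\Pi$ in $u$ has no zeros inside the box, so the number of positive roots stays constant. The unusual-looking bounds $|a|\le\frac13$ and $b\le\frac32$ presumably come from exactly this step.

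\textbf{Step 4: stability and sharpness.} By Theorem~\ref{quadratic-criterion}, stability is decided by $\sgn A(0)=\sgn(a)\,\sgn\widetilde A(0)$. We check that $\widetilde A(0)<0$, which gives a stable cycle for $a>0$ and an unstable one for $a<0$.

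For sharpness, we exhibit one limit cycle by a Hopf bifurcation. Since the linearization is a center, we use the first nonvanishing Lyapunov quantity: in this family the trace of the linear part is zero and the first Lyapunov constant is proportional to $a$. We then add a small linear trace perturbation $\varepsilon(x,y)$ of opposite sign, which produces a cycle. Strictly this leaves the family, so alternatively we use a parameter value where a cycle bifurcates from a polycycle or from infinity.

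It may be more natural to argue instead from the $a=0$ center: perturbing to small $a\neq0$, a first-order Melnikov or averaging function along the period annulus with a simple zero gives one limit cycle inside the family. This is the route we would try first.
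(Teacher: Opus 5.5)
Your check of condition (i) is fine, but the premise of your Step~1 is false: $A$ is not divisible by $a$. Since $A=-hg$ with $h=\sin^3\theta\,H(\cot\theta)$, $g=\sin^3\theta\,G(\cot\theta)$, $H(u)=au^3+(2b+1)u^2+3au+3b$ and $G(u)=u^3+au^2+bu-a$, at $a=0$ you get
\[
A=-\sin\theta\cos\theta\,\bigl((2b+1)\cos^2\theta+3b\sin^2\theta\bigr)\bigl(\cos^2\theta+b\sin^2\theta\bigr)\not\equiv0 .
\]
Only the value $A(0)=-a$ carries the factor $a$. So the center at $a=0$ must come from the reversibility $(x,y,t)\mapsto(x,-y,-t)$ directly, not from $A\equiv0$. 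Your reduction ``$\mathcal C=a\widetilde{\mathcal C}$ because $A=a\widetilde A$'' is therefore unjustified. Its conclusion is true for a different reason: at $a=0$ one checks that $\widehat K$ is affine in $z$, and the paper's computation gives $P_BL'-3P_B'L=16a(1+u^2)R_{a,b}(u)$, i.e.\ $\mathcal C(\theta)=-16a\sin^8\theta\,R_{a,b}(\cot\theta)$.

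The real gap is that you never produce the polynomial whose sign must be controlled, nor any argument for its sign, and condition (iii) is the whole substance of the corollary. The paper supplies both: the explicit degree-$8$ polynomial $R_{a,b}$, and a hand proof that $R_{a,b}>0$ on $\mathbb R$. That proof completes a square in the top terms and bounds three quadratic discriminants using $a^2\le\frac19$ and $1\le b\le\frac32$. Your fallback, showing that the $u$-discriminant has no zeros in the box, just moves the difficulty to an unanalysed bivariate polynomial.

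Likewise, in (ii) the factorization $A=-hg$ is what makes the root count easy. From $\Delta_H<0$ and $H(0)=3b>0$, $H$ has one real root, of sign $-\sgn a$. From $G'>0$ and $G(0)=-a$, $G$ has one real root, of sign $\sgn a$. A ``uniform Sturm sequence'' for an unfactored sextic is only a promise. Also, $\mathcal C$ does not vanish at $\theta=0$: $\mathcal C(0)=3A(0)B'(0)^2=-48a$.

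Your sharpness step rests on a wrong Lyapunov constant. The first one is $\int_0^{2\pi}A\,d\theta=\pi a(b-1)$, not a multiple of $a$ alone. It vanishes on the edge $b=1$ of the box, where the origin is a weak focus of order two with second constant $2\pi a(3a^2-4)\neq0$. The cycle is then produced inside the family by a degenerate Hopf bifurcation. Fix $0<|a|\le\frac13$ and set $b=1+\varepsilon^2$, $s=\varepsilon w$. The rescaled displacement is $1-2(4-3a^2)w^2+O(\varepsilon)$, and its simple positive root gives a small hyperbolic cycle for $b>1$ close to $1$.

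None of your three options delivers a cycle:
\begin{itemize}
\item the trace perturbation leaves the family, as you note;
\item the polycycle/infinity alternative is unspecified;
\item the Melnikov route from $a=0$ is not computed. Near the origin its coefficients are the $a$-derivatives of the Lyapunov constants, starting with $\pi(b-1)$, so small zeros arise only for $b$ near $1$, via exactly the mechanism you missed.
\end{itemize}
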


The rest of the paper is organized as follows.
In Section~2, we prove Theorem~\ref{criterion}.
Section~3 is devoted to the application to cubic rigid systems and the
proof of Theorem~\ref{mainresult-rigid}.
In Section~4, we derive the quadratic-system criterion and prove
Theorem~\ref{quadratic-criterion}, together with its explicit
applications.

\section{Uniqueness of Limit Cycles for Abel Equations}
The aim of this section is to prove Theorem~\ref{criterion}. 
The proof is based on the stability of positive periodic solutions. 
We first restrict the stability analysis to a single half-period. 
We then use the strict convexity/concavity of $\widehat{K}$ to show that every positive periodic solution is hyperbolic and has the same stability.
Therefore, there is at most one positive limit cycle.

\begin{remark}
It suffices to consider case (iii.1). 
Indeed, if (iii.2) holds, then under the time-reversal transformation $t\mapsto -t$, equation~\eqref{two-term-Abel} becomes
\begin{equation*}
\frac{dx}{dt}=-A(-t)x^3-B(-t)x^2:=A^*(t)x^3+B^*(t)x^2,
\end{equation*}
For $0<t<\pi$, we have 
\[{A^*(t)=-A(-t)=-A(\pi-t),\quad B^*(t)=-B(-t)=B(\pi-t)>0}.\] Thus, the two zeros of $A^*$ are $t_1^*=\pi-t_2$ and $t_2^*=\pi-t_1$, and $A^*$ has the sign configuration in (iii.1).
Under the time reversal $t\mapsto -t$, we have \[z^*(t)=\int_0^t B^*(s)\,ds=
\int_0^t B(\pi-s)\,ds=z(\pi)-z(\pi-t),\] 
\[K^*(t)=\frac{A^*(t)}{B^*(t)}
=-\frac{A(-t)}{B(-t)}=-K(-t),\] so that
\[
\widehat{K}^*(z^*(t)) = K^*(t) = -K(-t) = -\widehat{K}(z(-t)).
\]
Therefore
\[
\widehat{K}^*(z) = -\widehat{K}(z(\pi)-z).
\]
Hence the strict concavity in (iii.2) is transformed into the strict convexity in (iii.1).   
\end{remark} 

Therefore, in what follows, we only assume that (iii.1) holds. In particular, $A(t)>0$ for $t\in(0,t_1)\cup(t_2,\pi)$ and $A(t)<0$ for $t\in(t_1,t_2)$.

Let $\mathcal D$ denote the set of positive initial values $x_0$ for which the corresponding solution of~\eqref{two-term-Abel} is defined on $[0,2\pi]$. 
For $x_{0}\in\mathcal D$, denote by $x(t;x_0)$ the solution satisfying $x(0;x_0)=x_0$, and define the displacement map by
\begin{equation*}
D(x_0)=x(2\pi;x_0)-x_0.
\end{equation*}
Then $x(t;x_0)$ is a $2\pi$-periodic solution if and only if $D(x_0)=0$. 
Moreover, its stability is determined by the sign of $D'$, being stable if $D'(x_0)<0$ and unstable if $D'(x_0)>0$. 
Differentiating the flow with respect to the initial value and integrating the resulting variational equation, we obtain
\begin{equation}\label{eq:D'}
D'(x_0)=\exp\left(\int_{0}^{2\pi}\left(3A(t)x^2(t;x_0)+2B(t)x(t;x_0)\right)\,dt\right)-1.
\end{equation}

We next obtain a simpler expression to determine the stability of periodic solutions. 
To that end, let us fix a positive periodic solution $x(t):=x(t;x_0)$ and set $y(t):=\frac{1}{x(t)}$.

\begin{proposition}\label{prop:stability1}
Assume that condition (i) of Theorem~\ref{criterion} holds, and that $x(t)$ is a positive periodic solution of~\eqref{two-term-Abel}. Then, for any $t_0\in\mathbb{R}$,
\begin{equation*}
\sgn\left(D'(x_0)\right)
=
\sgn\left(
\int_{t_0}^{t_0+\pi}A(t)\Big(x(t)+x(t+\pi)\Big)^2\,dt
\right).
\end{equation*}
\end{proposition}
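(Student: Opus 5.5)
The plan is to reduce the exponent in \eqref{eq:D'} to a single integral of $A x^2$, and then to identify that integral with the half-period quantity through an exact derivative built from $x$ and its half-period translate. Since $D'(x_0)=e^{I}-1$ with $I=\int_0^{2\pi}(3Ax^2+2Bx)\,dt$, we have $\sgn D'(x_0)=\sgn I$. So it suffices to show that $I$ equals $\int_{t_0}^{t_0+\pi}A(t)\bigl(x(t)+x(t+\pi)\bigr)^2\,dt$.

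\emph{Step 1 (simplify $I$).} Because $x>0$ is $2\pi$-periodic, $\log x$ is periodic. Integrating $(\log x)'=Ax^2+Bx$ over $[0,2\pi]$ gives $\int_0^{2\pi}(Ax^2+Bx)\,dt=0$. Therefore
\[
I=3\int_0^{2\pi}Ax^2\,dt-2\int_0^{2\pi}Ax^2\,dt=\int_0^{2\pi}A x^2\,dt .
\]
Condition (i) says $A$ is $\pi$-periodic, and $x$ is $2\pi$-periodic. Setting $w(t):=x(t+\pi)$, this lets us rewrite
\[
I=\int_{t_0}^{t_0+\pi}A(t)\bigl(x(t)^2+w(t)^2\bigr)\,dt
\]
for any $t_0\in\mathbb{R}$.

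\emph{Step 2 (kill the cross term).} It remains to show that $\int_{t_0}^{t_0+\pi}A\,x\,w\,dt=0$; expanding $(x+w)^2$ then gives the claim. By (i), $w$ satisfies $w'=Aw^3-Bw^2$, while $y=1/x$ satisfies $y'=-Ax-B$. Similarly $(1/w)'=-Aw+B$. Hence
\[
u:=\frac1x+\frac1w=\frac{x+w}{xw}>0 \quad\text{satisfies}\quad u'=-A(x+w).
\]
It follows that
\[
(\log u)'=\frac{u'}{u}=-A\,xw .
\]
Moreover $u(t+\pi)=u(t)$, since translating by $\pi$ just swaps $x$ and $w$ by $2\pi$-periodicity. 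Integrating $(\log u)'$ over $[t_0,t_0+\pi]$ therefore yields $\int_{t_0}^{t_0+\pi}A\,xw\,dt=0$.

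\emph{Conclusion and main obstacle.} Combining the two steps,
\[
\int_{t_0}^{t_0+\pi}A(x+w)^2\,dt=\int_{t_0}^{t_0+\pi}A(x^2+w^2)\,dt+2\int_{t_0}^{t_0+\pi}A\,xw\,dt=I,
\]
which proves the proposition. The only non-routine point is finding the conserved-type quantity $u=1/x+1/x(\cdot+\pi)$. It is exactly here that the antisymmetry $B(t+\pi)=-B(t)$ is used: it makes the $B$-terms cancel in $u'$. The hypothesis $x>0$ makes $\log u$ well defined.
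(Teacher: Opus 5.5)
Your proposal is correct and follows essentially the same route as the paper. Your $u=1/x+1/x(\cdot+\pi)$ is exactly the paper's $r(t)=y(t)+y(t+\pi)$, and your identity $(\log u)'=-A\,x\,x(\cdot+\pi)$ is the paper's relation \eqref{eq:difr}, used in the same way to make the cross term vanish by $\pi$-periodicity.
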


\begin{proof}
Using that $x(t)$ is $2\pi$-periodic and~\eqref{eq:D'}, for any $t_0\in\mathbb{R}$ the stability of $x(t)$ is determined by
\begin{equation*}
\begin{split}
\sgn\left(D'(x_0)\right)
&=\sgn\left(\int_{0}^{2\pi}\left(3A(t)x^2(t)+2B(t)x(t)\right)\,dt\right)\\
&=\sgn\left(\int_{t_0}^{t_0+2\pi}\left(3A(t)x^2(t)+2B(t)x(t)\right)\,dt\right).
\end{split}
\end{equation*}
Denote
\begin{equation*}
I:=\int_{t_0}^{t_0+2\pi}\left(3A(t)x^2(t)+2B(t)x(t)\right)\,dt.
\end{equation*}
Since $x(t)$ is $2\pi$-periodic, 
\begin{equation*}
\int_{t_0}^{t_0+2\pi} \left(A(t)x^2(t)+B(t)x(t)\right) \, dt=\int_{t_0}^{t_0+2\pi} \frac{x'(t)}{x(t)} \, dt = \ln(x(t_0+2\pi))-\ln(x(t_0))=0.
\end{equation*}
Therefore, 
\begin{equation*}
I
=\int_{t_0}^{t_0+2\pi} 3A(t)x^2(t)+2B(t)x(t)-2\frac{x'(t)}{x(t)}\,dt
= \int_{t_0}^{t_0+2\pi}A(t)x^2(t)\,dt.
\end{equation*}
Since $A(t+\pi)=A(t)$,  we have
\begin{equation*}
    \begin{split}
        I=&\int_{t_0}^{t_0+\pi}A(t)x^2(t)\,dt+\int_{t_0+\pi}^{t_0+2\pi}A(t)x^2(t)\,dt\\
         =&\int_{t_0}^{t_0+\pi}A(t)x^2(t)\,dt+\int_{t_{0}}^{t_0+\pi}A(t+\pi)x^2(t+\pi)\,dt\\
         =&\int_{t_0}^{t_0+\pi}A(t)\left(x^2(t)+x^2(t+\pi)\right)\,dt\\
         =&\int_{t_0}^{t_0+\pi}A(t)\left(\frac{1}{y^2(t)}+\frac{1}{y^2(t+\pi)}\right)\,dt\\
         =&\int_{t_0}^{t_0+\pi}A(t)\left(\frac{1}{y(t)}+\frac{1}{y(t+\pi)}\right)^{2}\,dt-\int_{t_0}^{t_0+\pi}\frac{2A(t)}{y(t)y(t+\pi)}\,dt\\
    \end{split}
\end{equation*}
Define $r(t)=y(t)+y(t+\pi)>0$. Since $y$ is $2\pi$-periodic, $r$ is $\pi$-periodic. 
Moreover, using $B(t+\pi)=-B(t)$ and $A(t+\pi)=A(t)$, we obtain
\begin{equation}\label{eq:difr}
\begin{split}
r'(t)
&=y'(t)+y'(t+\pi)\\
&=\left(-B(t)-\frac{A(t)}{y(t)}\right)
+\left(-B(t+\pi)-\frac{A(t+\pi)}{y(t+\pi)}\right)\\
&=-\frac{A(t)r(t)}{y(t)y(t+\pi)}.
\end{split}
\end{equation}
Hence
\begin{equation*}
\int_{t_0}^{t_0+\pi}\frac{A(t)}{y(t)y(t+\pi)}\,dt
=-\int_{t_0}^{t_0+\pi}\frac{r'(t)}{r(t)}\,dt
=-\ln\frac{r(t_0+\pi)}{r(t_0)}=0.
\end{equation*}
Thus, we conclude that for every $t_0$,
\begin{equation*}
\begin{split}
\sgn\left(D'(x_0)\right) =&\sgn\left(\int_{t_0}^{t_0+\pi}A(t)\left(\frac{1}{y(t)}+\frac{1}{y(t+\pi)}\right)^{2}dt\right) \\
=&\sgn\left(\int_{t_0}^{t_0+\pi}A(t)\Big(x(t)+x(t+\pi)\Big)^2 dt\right).
\end{split}
\end{equation*}
\end{proof}

In what follows, we assume that conditions (i), (ii) and (iii.1) of Theorem~\ref{criterion} hold. By~\eqref{eq:difr}, $r'(t)<0$ on $(0,t_1)$, $r'(t)>0$ on $(t_1,t_2)$ and $r'(t)<0$ on $(t_2,t_1+\pi)$. Define
\begin{equation*}
r_1=r(t_1),\qquad r_2=r(t_2),\qquad r_*=r(\pi)=r(0).
\end{equation*}
Then $r_1<r_*<r_2$. 
See Figure \ref{fig:r}.
Moreover, let $\tau_0$, $\tau_1$ and $\tau_2$ denote the inverse functions of $r(t)$ on $[0,t_1]$, $[t_1,t_2]$ and $[t_2,t_1+\pi]$, respectively. 
Thus,
\begin{equation*}
\tau_0:[r_1,r_*]\to[0,t_1],\qquad
\tau_1:[r_1,r_2]\to[t_1,t_2],\qquad
\tau_2:[r_1,r_2]\to[t_2,t_1+\pi].
\end{equation*}
\begin{figure}[t]
\centering
\begin{tikzpicture}[x=1cm,y=1cm]
 \draw[->] (0,0)--(12,0) node[right] {$t$};
 \draw[->] (0,0)--(0,5) node[above] {$r$};
 \draw[blue!65!black,thick]
 (0,2.5).. controls (0.8,1.4) and (1.5,1)..(2.5,1)
 .. controls (4,1) and (5,4.2)..(6.4,4.2)
 .. controls (7.4,4.2) and (8,3.3)..(8.8,2.5)
 .. controls (9.6,1.4) and (10.3,1)..(11.3,1);
 \draw[dashed] (0,1)--(11.3,1);
 \draw[dashed] (0,2.5)--(8.8,2.5);
 \draw[dashed] (0,4.2)--(6.4,4.2);
 \foreach \x/\h/\lab in {2.5/1/t_1,6.4/4.2/t_2,8.8/2.5/\pi,11.3/1/t_1+\pi}
 {\draw[dashed] (\x,0)--(\x,\h);\node[below] at (\x,0) {$\lab$};}
 \node[left] at (0,1) {$r_1$};
 \node[left] at (0,2.5) {$r_*$};
 \node[left] at (0,4.2) {$r_2$};
 \node[below left] at (0,0) {$0$};
\end{tikzpicture}
\caption{Sketch of the function $r(t)$.}
\label{fig:r}
\end{figure}
Define $R(t)=y(t)-y(t+\pi)$. 
Then
\begin{equation*}
r(t+\pi)=r(t),\qquad R(t+\pi)=-R(t),\qquad y(t)=\frac{r(t)+R(t)}{2}.
\end{equation*}
A direct computation gives
\begin{equation}\label{eq:difR}
    \begin{split}
        \frac{dR(t)}{dt}=&\frac{dy(t)}{dt}-\frac{dy(t+\pi)}{dt}\\
        =&-\left(B(t)+\frac{A(t)}{y(t)}\right)+\left(B(t+\pi)+\frac{A(t+\pi)}{y(t+\pi)}\right)\\
        =&-2 B(t) + A(t)\left( \frac{1}{y(t+\pi)}-\frac{1}{y(t)} \right).
    \end{split}    
\end{equation}
Combining~\eqref{eq:difr} and~\eqref{eq:difR}, we obtain

\begin{equation}\label{eq:rRmonotone}
    \begin{split}
\frac{d}{dt}\big(r(t)R(t)\big)
&=r'(t)R(t)+r(t)R'(t)\\
&=-\frac{A(t)r(t)R(t)}
        {y(t)y(t+\pi)}
+r(t)\left(
-2B(t)
+\frac{A(t)R(t)}
       {y(t)y(t+\pi)}
\right)\\
&=-2B(t)r(t).
\end{split}
\end{equation}
This shows that $\frac{d}{dt}\big(r(t)R(t)\big)<0$ for $t\in(0,\pi)$, and hence $r(t)R(t)$ is strictly decreasing on $[0,\pi]$.

The stability of the periodic solution $x(t)$ can now be written in terms of these functions.

\begin{proposition}\label{prop:stability2}
Assume that conditions (i), (ii) and (iii.1) of Theorem~\ref{criterion} hold, and that $x(t)$ is a positive periodic solution of~\eqref{two-term-Abel}. Then,
\begin{equation}\label{eq:I2}
\sgn\left(D'(x_0)\right)=\sgn\left(
\int_{r_1}^{r_2}
\frac{4r
\Big(R(\tau_2(r))-R(\tau_1(r))\Big)
\Big(R(\tau_2(r))+R(\tau_1(r))\Big)}
{\left(r^2-R^2(\tau_2(r))\right)
\left(r^2-R^2(\tau_1(r))\right)}\,dr
\right).
\end{equation}
\end{proposition}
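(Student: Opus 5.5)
The plan is to start from Proposition~\ref{prop:stability1} with the particular choice $t_0=t_1$. This choice is natural because on $[t_1,t_1+\pi]=[t_1,t_2]\cup[t_2,t_1+\pi]$ the function $r$ is monotone on each piece, increasing on the first and decreasing on the second. So we get
\[
\sgn\left(D'(x_0)\right)=\sgn\left(\int_{t_1}^{t_1+\pi}A(t)\big(x(t)+x(t+\pi)\big)^2\,dt\right).
\]

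The first step is to express the integrand through $r$ and $R$. Since $y(t)=\frac{r+R}{2}$ and $y(t+\pi)=\frac{r-R}{2}$, we have
\[
y(t)y(t+\pi)=\frac{r^2-R^2}{4}>0
\quad\text{and}\quad
x(t)+x(t+\pi)=\frac{y(t)+y(t+\pi)}{y(t)y(t+\pi)}=\frac{4r}{r^2-R^2}.
\]
Hence the integrand equals $A\,\frac{16r^2}{(r^2-R^2)^2}$.

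The second step uses \eqref{eq:difr}, which now reads $r'=-\frac{4A r}{r^2-R^2}$. This gives the key identity
\[
A(t)\,dt=-\frac{r^2-R^2}{4r}\,dr,
\qquad\text{so}\qquad
A\big(x(t)+x(t+\pi)\big)^2\,dt=-\frac{4r}{r^2-R^2(t)}\,dr .
\]
I then split the integral into its parts over $[t_1,t_2]$ and $[t_2,t_1+\pi]$.
\begin{itemize}
\item On $[t_1,t_2]$, substitute $t=\tau_1(r)$, with $r$ running from $r_1$ to $r_2$. This piece contributes $-\int_{r_1}^{r_2}\frac{4r}{r^2-R^2(\tau_1(r))}\,dr$.
\item On $[t_2,t_1+\pi]$, substitute $t=\tau_2(r)$, with $r$ running from $r_2$ down to $r_1$. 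This piece contributes $+\int_{r_1}^{r_2}\frac{4r}{r^2-R^2(\tau_2(r))}\,dr$.
\end{itemize}
Adding the two pieces and putting them over a common denominator gives
\[
\int_{r_1}^{r_2}4r\,\frac{R^2(\tau_2(r))-R^2(\tau_1(r))}{\big(r^2-R^2(\tau_2(r))\big)\big(r^2-R^2(\tau_1(r))\big)}\,dr .
\]
Factoring $R^2(\tau_2)-R^2(\tau_1)=\big(R(\tau_2)-R(\tau_1)\big)\big(R(\tau_2)+R(\tau_1)\big)$ yields exactly \eqref{eq:I2}.

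The main technical point is justifying the substitution at the endpoints $t_1,t_2,t_1+\pi$. These are the zeros of $A$, where $r'=0$, so $\tau_1$ and $\tau_2$ are not differentiable at $r_1$ and $r_2$. I would carry out the change of variables on compact subintervals $[t_1+\varepsilon,t_2-\varepsilon]$, where $r'\neq0$ and $\tau_i$ are $C^1$ diffeomorphisms, and then let $\varepsilon\to0$. This limit is legitimate because the integrand in $r$ is continuous and bounded on $[r_1,r_2]$: $r^2-R^2=4y(t)y(t+\pi)$ is bounded below by a positive constant, since $y$ is continuous, positive and periodic. Both sides are therefore limits of integrals of bounded integrands, and the identity passes to the limit.
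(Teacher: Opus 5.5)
Your proposal is correct and follows the paper's proof essentially step for step. You take $t_0=t_1$ in Proposition~\ref{prop:stability1}, use \eqref{eq:difr} to replace $A\,dt$ by a multiple of $dr$, substitute $t=\tau_1(r)$ and $t=\tau_2(r)$ on the two monotone pieces, and combine the results via $4y(t)y(t+\pi)=r^2-R^2$.

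Your $\varepsilon$-truncation at the endpoints is valid but not needed. The substitution formula $\int g(r(t))\,r'(t)\,dt=\int g(r)\,dr$ only requires $r\in C^1$ and $g(r)=4r/\bigl(r^2-R^2(\tau_i(r))\bigr)$ continuous on $[r_1,r_2]$, so it holds even where $r'$ vanishes.
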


\begin{proof}
By Proposition~\ref{prop:stability1} and taking $t_0=t_1$, we have $\sgn(D'(x_0))=\sgn(I)$, where
\begin{equation*}
I=\int_{t_1}^{t_1+\pi}A(t)\Big(x(t)+x(t+\pi)\Big)^2\,dt.
\end{equation*}
{By~\eqref{eq:difr}},
\begin{equation*}
\begin{split}
I=&\int_{t_{1}}^{t_{1}+\pi}A(t)\left(\frac{1}{y(t)}+\frac{1}{y(t+\pi)}\right)^{2}dt\\
=&\int_{t_{1}}^{t_{2}}A(t)\left(\frac{1}{y(t)}+\frac{1}{y(t+\pi)}\right)^{2}dt+\int_{t_{2}}^{t_{1}+\pi}A(t)\left(\frac{1}{y(t)}+\frac{1}{y(t+\pi)}\right)^{2}dt\\
=&\int_{t_{1}}^{t_{2}}-\frac{r'(t)y(t)y(t+\pi)}{r(t)}\left(\frac{1}{y(t)}+\frac{1}{y(t+\pi)}\right)^{2}dt\\
&+\int_{t_{2}}^{t_{1}+\pi}-\frac{r'(t)y(t)y(t+\pi)}{r(t)}\left(\frac{1}{y(t)}+\frac{1}{y(t+\pi)}\right)^{2}dt.
\end{split}
\end{equation*}
{By the change of variables $r=r(t)$,  and using $r(t)=y(t)+y(t+\pi)$,}
\begin{equation*}
\begin{split}
I&=-\int_{r_{1}}^{r_{2}}\left(\frac{1}{y(\tau_{1}(r))}+\frac{1}{y(\tau_{1}(r)+\pi)}\right)dr-\int_{r_{2}}^{r_{1}}\left(\frac{1}{y(\tau_{2}(r))}+\frac{1}{y(\tau_{2}(r)+\pi)}\right)dr\\
&=\int_{r_{1}}^{r_{2}}\left(\frac{1}{y(\tau_{2}(r))}+\frac{1}{y(\tau_{2}(r)+\pi)}-\frac{1}{y(\tau_{1}(r))}-\frac{1}{y(\tau_{1}(r)+\pi)}\right)\,dr\\
&=\int_{r_{1}}^{r_{2}}\left(\frac{r}{y(\tau_{2}(r))y(\tau_{2}(r)+\pi)}-\frac{r}{y(\tau_{1}(r))y(\tau_{1}(r)+\pi)}\right)\,dr.
    \end{split}
\end{equation*}
Since $r=y(\tau_i(r))+y(\tau_i(r)+\pi)$ for $i=1,2$, we have
\begin{equation*}
4y(\tau_i(r))y(\tau_i(r)+\pi)=r^2-R^2(\tau_i(r)).
\end{equation*}
Therefore,
\begin{equation*}
\begin{split}
I&=\int_{r_{1}}^{r_{2}}\left(\frac{4r}{r^{2}-\left(y(\tau_{2}(r))-y(\tau_{2}(r)+\pi)\right)^{2}}-\frac{4r}{r^{2}-\left(y(\tau_{1}(r))-y(\tau_{1}(r)+\pi)\right)^{2}}\right)dr\\
&=\int_{r_{1}}^{r_{2}}\frac{4r\left(\left(y(\tau_{2}(r))-y(\tau_{2}(r)+\pi)\right)^{2}-\left(y(\tau_{1}(r))-y(\tau_{1}(r)+\pi)\right)^{2}\right)}{\left(r^{2}-\left(y(\tau_{2}(r))-y(\tau_{2}(r)+\pi)\right)^{2}\right)\left(r^{2}-\left(y(\tau_{1}(r))-y(\tau_{1}(r)+\pi)\right)^{2}\right)}\,dr\\
&=
\int_{r_{1}}^{r_{2}}
\frac{4r
\Big(R(\tau_2(r))-R(\tau_1(r)) \Big)
\Big(R(\tau_2(r))+R(\tau_1(r)) \Big)
}{
\left(r^{2}-R^2(\tau_{2}(r))\right)
\left(r^{2}-R^2(\tau_{1}(r)))\right)
}\,dr.
\end{split}
\end{equation*}
\end{proof}

Note that $r=r(\tau_i(r))>|R(\tau_i(r))|$ for $i=1,2$. Therefore, the denominator in~\eqref{eq:I2} is positive. Define
\begin{equation*}
U(r):=R(\tau_2(r))-R(\tau_1(r)),\qquad
V(r):=R(\tau_2(r))+R(\tau_1(r)).
\end{equation*}
Thus, to prove that $D'(x_0)>0$, it is sufficient to show that $U(r)<0$ and $V(r)<0$ for every $r\in(r_1,r_2)$.
Let
\begin{equation*}
K(t):=\frac{A(t)}{B(t)}.
\end{equation*}
Notice that condition (i) of Theorem~\ref{criterion}, together with the continuity of $B$, implies $B(0)=B(\pi)=0$.
Hence $K$ is not defined at $t=\pi$, which corresponds to $r=r_*$.
Condition (i) also gives $K(t+\pi)=-K(t)$ whenever $B(t)\neq0$.
Moreover, under conditions (ii) and (iii.1), $K(t)>0$ for $t\in(0,t_{1})\cup(t_{2},\pi)$ and $K(t)<0$ for $t\in(t_{1},t_{2})$.
See Figure~\ref{fig:K}.

\begin{figure}[t]
\centering
\begin{tikzpicture}[x=1cm,y=0.75cm]
 \draw[->] (0,0)--(12,0) node[right] {$t$};
 \draw[->] (0,-3.3)--(0,3.6) node[above] {$K(t)$};
 \draw[dashed] (9,-3.15)--(9,3.15);
 \draw[blue!65!black,thick]
   plot[domain=16.30655:163.69345,samples=180,variable=\u]
   ({\u/20},{cos(2*\u)/sin(\u)});
 \draw[blue!65!black,thick]
   plot[domain=196.30655:225,samples=70,variable=\u]
   ({\u/20},{cos(2*\u)/sin(\u)});
 \foreach \x in {2.25,6.75,11.25}
 {\draw (\x,0.08)--(\x,-0.08);}
 \node[below left] at (2.25,-0.08) {$t_1$};
 \node[below right] at (6.75,-0.08) {$t_2$};
 \node[below right] at (11.25,-0.08) {$t_1+\pi$};
 \node[below left] at (9,0) {$\pi$};
 \node[below left] at (0,0) {$0$};
\end{tikzpicture}
\caption{Sketch of the function $K(t)$.}
\label{fig:K}
\end{figure}
The functions $U$ and $V$ satisfy the following properties.

\begin{proposition}
Assume that conditions (i), (ii) and (iii.1) of Theorem~\ref{criterion} hold, and that $x(t)$ is a positive periodic solution of~\eqref{two-term-Abel}. Then
\begin{enumerate}
\item[(a)] $V(r_1)=0$ and $U(r_2)=0$.

\item[(b)] For $r\in(r_1,r_2)\setminus\{r_*\}$,
\begin{equation}\label{eq:difU}
\frac{dU}{dr}
=
2\frac{y(\tau_2(r)+\pi)y(\tau_2(r))}{rK(\tau_2(r))}
-
2\frac{y(\tau_1(r)+\pi)y(\tau_1(r))}{rK(\tau_1(r))}
-\frac{U(r)}{r}.
\end{equation}

\item[(c)] For $r\in(r_1,r_2)\setminus\{r_*\}$,
\begin{equation}\label{eq:difV}
\frac{dV}{dr}
=
2\frac{y(\tau_2(r)+\pi)y(\tau_2(r))}{rK(\tau_2(r))}
+
2\frac{y(\tau_1(r)+\pi)y(\tau_1(r))}{rK(\tau_1(r))}
-\frac{V(r)}{r}.
\end{equation}
\end{enumerate}
\end{proposition}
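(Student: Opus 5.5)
Part (a) comes from the endpoint values of the inverse branches; parts (b) and (c) come from differentiating $R(\tau_i(r))$ along each branch with the chain rule.

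\emph{Part (a).} By definition $\tau_1(r_1)=t_1$ and $\tau_2(r_1)=t_1+\pi$. Since $R(t+\pi)=-R(t)$, we get
\[
V(r_1)=R(t_1+\pi)+R(t_1)=0 .
\]
Similarly $\tau_1(r_2)=\tau_2(r_2)=t_2$, so
\[
U(r_2)=R(t_2)-R(t_2)=0 .
\]

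\emph{Parts (b) and (c).} For $i=1,2$ set $t=\tau_i(r)$ and $F_i(r):=R(\tau_i(r))$. By the chain rule,
\[
F_i'(r)=\frac{R'(t)}{r'(t)} .
\]
This is valid whenever $r'(t)\neq 0$, which holds because $t$ lies in the interior of the corresponding monotonicity interval. Into this quotient we substitute two earlier identities:
\begin{itemize}
\item \eqref{eq:difr}, namely $r'(t)=-A(t)r(t)/(y(t)y(t+\pi))$;
\item \eqref{eq:difR}, rewritten using $\frac{1}{y(t+\pi)}-\frac{1}{y(t)}=\frac{R(t)}{y(t)y(t+\pi)}$, so that $R'(t)=-2B(t)+A(t)R(t)/(y(t)y(t+\pi))$.
\end{itemize}

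Dividing gives
\[
F_i'(r)=\frac{2B(t)\,y(t)y(t+\pi)}{A(t)\,r}-\frac{R(t)}{r}
=\frac{2\,y(t)y(t+\pi)}{rK(t)}-\frac{F_i(r)}{r},
\]
where $r=r(t)$ and $K=A/B$. Taking $F_2-F_1$ yields \eqref{eq:difU}, and taking $F_2+F_1$ yields \eqref{eq:difV}.

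\emph{Where the formula is defined.} $K(t)$ must be defined and nonzero, and $A(t)\neq 0$ (equivalently $r'(t)\neq0$); this holds for $t$ in the open intervals. The only point of $(r_1,r_2)$ where the branch $\tau_2$ reaches $t=\pi$ (where $B=0$ and $K$ is undefined) is $r=r_*$, which is excluded from the statement. Zeros of $A$ occur only at the endpoints $r_1$ and $r_2$.

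There is a subtlety: $\tau_2$ is defined on $[r_1,r_2]$, but its range $[t_2,t_1+\pi]$ crosses $\pi$. Beyond $\pi$ one uses $K(t+\pi)=-K(t)$, while $y(t)y(t+\pi)$ is unchanged, so the formula remains consistent. No step here is a real obstacle; the whole argument is routine bookkeeping with the chain rule.
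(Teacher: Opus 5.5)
Your proof is correct and follows essentially the same route as the paper. Part (a) uses the endpoint values $\tau_1(r_1)=t_1$, $\tau_2(r_1)=t_1+\pi$, $\tau_1(r_2)=\tau_2(r_2)=t_2$ together with $R(t+\pi)=-R(t)$. Parts (b)--(c) use the chain rule with $d\tau_i/dr=-y(\tau_i)y(\tau_i+\pi)/(A(\tau_i)\,r)$ from \eqref{eq:difr} and $R'$ from \eqref{eq:difR}, exactly as the paper does; your explicit remark that $r_*$ is excluded because $\tau_2(r_*)=\pi$, where $K$ is undefined, is a detail the paper leaves implicit.
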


\begin{proof}
Since $\tau_1(r_1)=t_1$ and $\tau_2(r_1)=t_1+\pi$,
\begin{equation*}
V(r_{1})=R(\tau_2(r_{1}))+R(\tau_1(r_{1}))
=R(t_1+\pi)+R(t_1)=
-R(t_1)+R(t_1)=0.
\end{equation*} 
Analogously, $\tau_1(r_2)=\tau_2(r_2)=t_2$, so $U(r_2)=0$.

Using~\eqref{eq:difr},
\begin{equation*}
\frac{d\tau_i(r)}{dr}
=
-\frac{y(\tau_i(r))y(\tau_i(r)+\pi)}
{A(\tau_i(r))r},\qquad i=1,2.
\end{equation*}
Differentiating $U$ and $V$ and using the preceding expressions, we obtain
\begin{equation*}
\begin{split}
\frac{dU}{dr} =& 
\frac{dR}{d t}(\tau_2(r)) \frac{d\tau_2}{dr}(r)
- \frac{dR}{d t}(\tau_1(r)) \frac{d\tau_1}{dr}(r) \\
=&
2\frac{B(\tau_{2}(r))}{A(\tau_{2}(r))}\frac{y(\tau_{2}(r)+\pi)y(\tau_{2}(r))}{r}
+\frac{y(\tau_{2}(r)+\pi)-y(\tau_{2}(r))}{r}\\
-&2\frac{B(\tau_{1}(r))}{A(\tau_{1}(r))}\frac{y(\tau_{1}(r)+\pi)y(\tau_{1}(r))}{r}-\frac{y(\tau_{1}(r)+\pi)-y(\tau_{1}(r))}{r}\\
=&
2  \frac{y(\tau_{2}(r)+\pi)y(\tau_{2}(r))}{r K(\tau_{2}(r))}
-2\frac{y(\tau_{1}(r)+\pi)y(\tau_{1}(r))}{r K(\tau_{1}(r))}-\frac{U(r)}{r }.
\end{split}
\end{equation*}
Arguing analogously, we obtain
\begin{equation*}
    \begin{split}
\frac{dV}{dr} =& 
\frac{dR}{d t}(\tau_2(r)) \frac{d\tau_2}{dr}(r)
+ \frac{dR}{d t}(\tau_1(r)) \frac{d\tau_1}{dr}(r) \\
=&
2\frac{B(\tau_{2}(r))}{A(\tau_{2}(r))}\frac{y(\tau_{2}(r)+\pi)y(\tau_{2}(r))}{r}
+\frac{y(\tau_{2}(r)+\pi)-y(\tau_{2}(r))}{r}\\
+&2\frac{B(\tau_{1}(r))}{A(\tau_{1}(r))}\frac{y(\tau_{1}(r)+\pi)y(\tau_{1}(r))}{r}+\frac{y(\tau_{1}(r)+\pi)-y(\tau_{1}(r))}{r}\\
=&
2  \frac{y(\tau_{2}(r)+\pi)y(\tau_{2}(r))}{r K(\tau_{2}(r))}
+2\frac{y(\tau_{1}(r)+\pi)y(\tau_{1}(r))}{r K(\tau_{1}(r))}-\frac{V(r)}{r}.
    \end{split}
\end{equation*}
\end{proof}

We next compute the derivatives of $U$ and $V$ at their zeros.

\begin{proposition}\label{prop:UVzeros}
Let $r_0\in(r_1,r_2)\setminus\{r_*\}$. If $r_0$ is a zero of $V(r)$, then
\begin{equation*}
\frac{dV}{dr}(r_0)
=
\frac{K(\tau_1(r_0))+K(\tau_2(r_0))}
{K(\tau_1(r_0))K(\tau_2(r_0))}
\frac{2y(\tau_2(r_0))y(\tau_1(r_0))}{r_0}.
\end{equation*}
If $r_0$ is a zero of $U(r)$, then
\begin{equation*}
\frac{dU}{dr}(r_0)
=
\frac{K(\tau_1(r_0))-K(\tau_2(r_0))}
{K(\tau_1(r_0))K(\tau_2(r_0))}
\frac{2y(\tau_2(r_0))y(\tau_2(r_0)+\pi)}{r_0}.
\end{equation*}
\end{proposition}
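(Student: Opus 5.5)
The plan is to substitute the zero condition into the derivative formulas \eqref{eq:difV} and \eqref{eq:difU} from the preceding proposition. After that, I rewrite the products $y(\tau_i(r_0))y(\tau_i(r_0)+\pi)$ using the relations among $r$, $R$ and $y$. Throughout, abbreviate $a=\tau_1(r_0)$ and $b=\tau_2(r_0)$. Then $r(a)=r(b)=r_0$ and $R(a)=y(a)-y(a+\pi)$, $R(b)=y(b)-y(b+\pi)$.

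\emph{Zero of $V$.} If $V(r_0)=0$, the term $-V/r$ in \eqref{eq:difV} drops, leaving
\[
\frac{dV}{dr}(r_0)=\frac{2}{r_0}\left(\frac{y(b)y(b+\pi)}{K(b)}+\frac{y(a)y(a+\pi)}{K(a)}\right).
\]
The condition $R(b)=-R(a)$, together with $y(a)+y(a+\pi)=y(b)+y(b+\pi)=r_0$, gives $y(b)=y(a+\pi)$ and $y(b+\pi)=y(a)$. One way to see this is to add and subtract the two relations and halve. Hence
\[
y(b)y(b+\pi)=y(a)y(a+\pi)=y(a)y(b).
\]
Factoring this common value out of the bracket gives $\bigl(\tfrac1{K(a)}+\tfrac1{K(b)}\bigr)\frac{2y(a)y(b)}{r_0}$. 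This is exactly the claimed expression, because $\tfrac1{K(a)}+\tfrac1{K(b)}=\frac{K(a)+K(b)}{K(a)K(b)}$.

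\emph{Zero of $U$.} If $U(r_0)=0$, then $R(a)=R(b)$, and with the same sum relation we get $y(a)=y(b)$ and $y(a+\pi)=y(b+\pi)$. Thus both products in \eqref{eq:difU} equal $y(b)y(b+\pi)$. The derivative becomes $\frac{2y(b)y(b+\pi)}{r_0}\bigl(\tfrac1{K(b)}-\tfrac1{K(a)}\bigr)=\frac{K(a)-K(b)}{K(a)K(b)}\frac{2y(b)y(b+\pi)}{r_0}$, as stated.

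The main point needing care is justifying that the formulas are valid at $r_0$. This requires $K(\tau_i(r_0))$ to be defined and nonzero. For $r_0\in(r_1,r_2)$, $\tau_1(r_0)$ lies in $(t_1,t_2)$, where $A\neq0$ and $B>0$. Also, $\tau_2(r_0)$ lies in $(t_2,t_1+\pi)$, and since $r_0\neq r_*$ it differs from $\pi$. There $B\neq 0$ (by $B(t+\pi)=-B(t)$) and $A\neq0$. So both $K$ values are finite and nonzero, and the derivative formulas of the preceding proposition apply. Beyond this check, the argument is purely algebraic.
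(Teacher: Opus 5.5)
Your proposal is correct and follows the paper's proof essentially step for step. In both, the zero condition is substituted into \eqref{eq:difV} or \eqref{eq:difU}, the relation $r_0=y(\tau_i(r_0))+y(\tau_i(r_0)+\pi)$ yields the coincidences of the $y$-values, and the common product is factored out. Your added check that $K(\tau_1(r_0))$ and $K(\tau_2(r_0))$ are finite and nonzero (using $r_0\neq r_*$ to exclude $\tau_2(r_0)=\pi$) is a sound extra detail that the paper leaves implicit.
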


\begin{proof}
If $V(r_0)=0$, then
\begin{equation*}
R(\tau_2(r_0))+R(\tau_1(r_0))=0.
\end{equation*}
Since
\begin{equation*}
r_0=y(\tau_2(r_0))+y(\tau_2(r_0)+\pi)
=y(\tau_1(r_0))+y(\tau_1(r_0)+\pi),
\end{equation*}
we obtain
\begin{equation*}
y(\tau_2(r_0))=y(\tau_1(r_0)+\pi),\qquad
y(\tau_2(r_0)+\pi)=y(\tau_1(r_0)).
\end{equation*}
Thus, by~\eqref{eq:difV},
\begin{equation*}
\frac{dV}{dr}(r_0)
=
\left(
\frac{1}{K(\tau_2(r_0))}
+\frac{1}{K(\tau_1(r_0))}
\right)
\frac{2y(\tau_2(r_0))y(\tau_1(r_0))}{r_0},
\end{equation*}
which gives the first formula.

Analogously, if $U(r_0)=0$, then
\begin{equation*}
y(\tau_2(r_0))=y(\tau_1(r_0)),\qquad
y(\tau_2(r_0)+\pi)=y(\tau_1(r_0)+\pi),
\end{equation*}
and~\eqref{eq:difU} gives
\begin{equation*}
\frac{dU}{dr}(r_0)
=
\left(
\frac{1}{K(\tau_2(r_0))}
-\frac{1}{K(\tau_1(r_0))}
\right)
\frac{2y(\tau_2(r_0))y(\tau_2(r_0)+\pi)}{r_0}.
\end{equation*}
\end{proof}

By Proposition~\ref{prop:UVzeros}, the signs of $U'$ and $V'$ at their zeros are determined by the signs of $K(\tau_1(r))$, $K(\tau_2(r))$, and their sum or difference. The following lemma provides the key information on their sum.

\begin{lemma}\label{lemma2}
Assume that condition (i) of Theorem~\ref{criterion} holds and that $\widehat K$ is strictly convex on $(z(0),z(\pi))$. 
Let $x(t)$ be a positive periodic solution of \eqref{two-term-Abel}, and set $y(t)=1/x(t)$.
Suppose that there exist $0<\tau_1<\tau_2<\pi$ such that
\begin{equation}\label{7592}
y(\tau_1)=y(\tau_2+\pi),\qquad
y(\tau_2)=y(\tau_1+\pi).
\end{equation}
Then $K(\tau_1)+K(\tau_2)>0$.
\end{lemma}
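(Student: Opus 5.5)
The plan is to pass to the variable $z$ on $(0,\pi)$, where the equations for $y(t)$ and $y(t+\pi)$ become simple, and to turn \eqref{7592} into an integral identity for $\widehat K$ on $[z(\tau_1),z(\tau_2)]$. Strict convexity then compares that integral with $\widehat K(z(\tau_1))+\widehat K(z(\tau_2))=K(\tau_1)+K(\tau_2)$.

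\textbf{Step 1: equations in $z$.} Since $y'=-B-A/y$, and by condition (i) $B(t+\pi)=-B(t)$, $A(t+\pi)=A(t)$, the functions $p(z):=y(t(z))$ and $q(z):=y(t(z)+\pi)$ satisfy
\begin{equation*}
\frac{dp}{dz}=-1-\frac{\widehat K(z)}{p},\qquad \frac{dq}{dz}=1-\frac{\widehat K(z)}{q}.
\end{equation*}
Put $z_i=z(\tau_i)$. Then \eqref{7592} says $p(z_1)=q(z_2)=:\alpha$ and $p(z_2)=q(z_1)=:\beta$, so on $[z_1,z_2]$ the two functions exchange their boundary values.

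\textbf{Step 2: integral identities.} Integrate $\frac{d}{dz}(p^2/2)=-p-\widehat K$ and $\frac{d}{dz}(q^2/2)=q-\widehat K$ over $[z_1,z_2]$. Adding the two, the boundary terms cancel because of the exchange of values, which gives
\begin{equation*}
2\int_{z_1}^{z_2}\widehat K(z)\,dz=\int_{z_1}^{z_2}\bigl(q(z)-p(z)\bigr)\,dz.
\end{equation*}
Subtracting them gives $\int_{z_1}^{z_2}(p+q)\,dz=\alpha^2-\beta^2$, so $\alpha>\beta$; this orientation information will be needed below.

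\textbf{Step 3: use convexity.} Strict convexity of $\widehat K$ puts the chord strictly above the graph on $(z_1,z_2)$. Hence
\begin{equation*}
\frac{K(\tau_1)+K(\tau_2)}{2}>\frac{1}{z_2-z_1}\int_{z_1}^{z_2}\widehat K\,dz.
\end{equation*}
So it suffices to prove $\int_{z_1}^{z_2}(q-p)\,dz\ge 0$.

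\textbf{Step 4: the main obstacle.} The difficulty is the sign of $\int_{z_1}^{z_2}(q-p)\,dz$. The difference $u:=q-p$ satisfies
\begin{equation*}
\frac{du}{dz}=2+\frac{\widehat K\,u}{pq},\qquad u(z_1)=\alpha-\beta>0,\qquad u(z_2)=\beta-\alpha=-u(z_1).
\end{equation*}
Its boundary values are antisymmetric, so a naive argument gives no sign. I would first try a comparison/symmetry argument: write $u\,du/dz=2u+\widehat K u^2/(pq)$ and integrate, using $\int (u^2)'=0$. Together with $r=p+q$ and $4pq=r^2-u^2$ (as in Proposition~\ref{prop:stability2}), this reduces the question to relating $\int u$ to a weighted integral of $\widehat K$.

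If that does not close the argument, the fallback is to use convexity against a better weight than the constant 1. Integrate the equations for $p^2$ and $q^2$ against the linear function $z-\tfrac{z_1+z_2}{2}$, or against the weight $1/(pq)$ that arises naturally. Then integrate by parts so that the endpoint values $\widehat K(z_1)$, $\widehat K(z_2)$ appear, and use $\int \widehat K''\varphi\ge0$ for a nonnegative test function $\varphi$ vanishing at $z_1,z_2$. The remaining terms should have a sign thanks to $\alpha>\beta$. Producing such a weighted identity is where I expect the real work to lie.
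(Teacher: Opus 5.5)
Your Steps 1--3 are correct. The identities $2\int_{z_1}^{z_2}\widehat K\,dz=\int_{z_1}^{z_2}(q-p)\,dz$ and $\int_{z_1}^{z_2}(p+q)\,dz=\alpha^2-\beta^2$ hold, and so does the chord bound. But the proof stops where the content of the lemma lies, and the target of Step 4 is worse than hard. The inequality $\int_{z_1}^{z_2}(q-p)\,dz\ge0$, i.e.\ $\int_{z_1}^{z_2}\widehat K\,dz\ge0$, is strictly stronger than the lemma. It does not follow from what you use: the two $z$-equations on $[z_1,z_2]$, the exchanged boundary values, and strict convexity, which is also all the lemma needs.

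Here is why. Since $r=p+q$ satisfies $r'=-\widehat K\,r/(pq)$ and $r(z_1)=r(z_2)$, every exchange configuration satisfies exactly $\int_{z_1}^{z_2}\widehat K\,(\tfrac1p+\tfrac1q)\,dz=0$.
\begin{itemize}
\item Take $\widehat K=\varepsilon(\kappa+c)$ with $\kappa$ strictly convex. The implicit function theorem (adjusting $\alpha$ and $c$) gives exchange configurations for small $\varepsilon>0$.
\item As $\varepsilon\to0$, $p\to\alpha-(z-z_1)$ and $q\to\beta+(z-z_1)$ with $\alpha-\beta=z_2-z_1$. The limiting weight $\tfrac1p+\tfrac1q$ is even about the midpoint and increases toward the endpoints.
\item The symmetrization $\widehat K(z)+\widehat K(z_1+z_2-z)$ has the same shape, so Chebyshev's integral inequality forces $\int_{z_1}^{z_2}\widehat K\,dz<0$, i.e.\ $\int_{z_1}^{z_2}(q-p)\,dz<0$, for small $\varepsilon$.
\end{itemize}
So Step 3, which compares the chord with an unweighted average, already discards what is needed. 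Your fallbacks do not recover it. The weight $1/(pq)$ only gives $\int\widehat K/(pq)\,dz=0$ (from $\ln r$), an identity of the same non-symmetric type. Testing $\widehat K''$ against $(z-z_1)(z_2-z)$ just reproduces the chord inequality of Step 3.

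The missing idea is to pair $p$ with $q$ at the reflected point $z^*=z_1+z_2-z$, not at the same point. Set $W(z)=p(z)-q(z^*)$. The exchange condition gives $W(z_1)=W(z_2)=0$: zero boundary values, unlike your $u=q-p$, whose boundary values are antisymmetric. Your $z$-equations give
\[
W'=\frac{\widehat K(z)}{p(z)\,q(z^*)}\,W-\frac{\widehat K(z)+\widehat K(z^*)}{q(z^*)}.
\]
With $\mu(z)=\exp\bigl(-\int_{z_1}^{z}\widehat K(s)/(p(s)q(s^*))\,ds\bigr)$ one gets $(\mu W)'=-\mu\,\bigl(\widehat K(z)+\widehat K(z^*)\bigr)/q(z^*)$. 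Strict convexity gives the \emph{pointwise} bound $\widehat K(z)+\widehat K(z^*)<\widehat K(z_1)+\widehat K(z_2)$ on $(z_1,z_2)$. So if $\widehat K(z_1)+\widehat K(z_2)\le0$, then $\mu W$ is strictly increasing, contradicting $W(z_1)=W(z_2)=0$. This is the paper's argument, written there in $t$ with $t^*(t)$ defined by $z(t)+z(t^*)=z(\tau_1)+z(\tau_2)$. It works because convexity enters through symmetric pairs $z,z^*$ rather than through an average.
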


\begin{proof} 
Recall that, by definition, $z(t)=\int_0^t B(s)\,ds$. 
We first prove the following claim. Fix $0<u<v<\pi$. For $t\in(u,v)$, let $t^*(t)\in(u,v)$ be determined by
$z(t)+z(t^*(t))=z(u)+z(v)$.
Then
\begin{equation}\label{eq:K}
K(t)+K(t^*(t))<K(u)+K(v),\qquad u<t<v.
\end{equation}
Indeed, since $B(t)>0$ on $(0,\pi)$, $z(t)$ is strictly increasing. Thus, for $z\in(z(u),z(v))$, strict convexity of $\widehat K$ gives
\begin{equation*}
\widehat K(z)+\widehat K(z(u)+z(v)-z)
<
\widehat K(z(u))+\widehat K(z(v)),
\end{equation*}
which proves~\eqref{eq:K}.

Now take $u=\tau_1$ and $v=\tau_2$. Extend $t^*$ continuously to $[u,v]$ by setting $t^*(u)=v$ and $t^*(v)=u$. Differentiating the identity defining $t^*$ gives
\begin{equation*}
(t^*)'(t)=-\frac{B(t)}{B(t^*(t))}.
\end{equation*}
Set
\begin{equation*}
w(t)=y(t)-y(t^*(t)+\pi),\qquad u\leq t\leq v.
\end{equation*}
By~\eqref{7592}, $w(u)=w(v)=0$. Since $y'(t)=-B(t)-A(t)/y(t)$, we obtain
\begin{equation*}
\begin{split}
\frac{dw(t)}{dt}=&\frac{dy(t)}{dt}-\frac{dy(t^*(t)+\pi)}{dt}\\
            =&-B(t)-\frac{A(t)}{y(t)}+\left(-B(t^{*}(t)+\pi)-\frac{A(t^{*}(t)+\pi)}{y(t^{*}(t)+\pi)}\right)\frac{B(t)}{B(t^{*}(t))}\\
 =&-B(t)-\frac{A(t)}{y(t)}+\left(B(t^{*}(t))-\frac{A(t^{*}(t))}{y(t^{*}(t)+\pi)}\right)\frac{B(t)}{B(t^{*}(t))}\\
  =&-\frac{A(t)}{y(t)}-\frac{A(t^{*}(t))}{y(t^{*}(t)+\pi)}\frac{B(t)}{B(t^{*}(t))}\\
  =&-B(t)\left(\frac{K(t)}{y(t)}+\frac{K(t^{*}(t))}{y(t^{*}(t)+\pi)}\right).
    \end{split}
\end{equation*}
Let
\begin{equation*}
\mu(t)=
\exp\left(
-\int_u^t\frac{A(s)}{y(s)y(t^*(s)+\pi)}\,ds
\right)>0.
\end{equation*}
Then we have
\begin{equation*}
    \begin{split}
\frac{d}{dt}\left(\mu(t) w(t)\right)
=&\frac{d\mu(t)}{dt}w(t)+\mu(t)\frac{dw(t)}{dt}\\
=&\mu(t)\left(-\frac{A(t)}{y(t)y(t^*(t)+\pi)}\right)\left(y(t)-y(t^*(t)+\pi)\right)\\
&-\mu(t)B(t)\left(\frac{K(t)}{y(t)}+\frac{K(t^{*}(t))}{y(t^{*}(t)+\pi)}\right)\\
=&\mu(t)B(t)\left(\frac{K(t)\left(y(t^*(t)+\pi)-y(t)\right)}{y(t)y(t^*(t)+\pi)}\right)\\
&-\mu(t)B(t)\left(\frac{K(t)}{y(t)}+\frac{K(t^{*}(t))}{y(t^{*}(t)+\pi)}\right)\\
=&\mu(t)B(t)\left(\frac{K(t)}{y(t)}-\frac{K(t)}{y(t^*(t)+\pi)}\right)-\mu(t)B(t)\left(\frac{K(t)}{y(t)}+\frac{K(t^{*}(t))}{y(t^{*}(t)+\pi)}\right)\\
=&-\frac{\mu(t)B(t)}{y(t^*(t)+\pi)}
 \left(K(t)+K(t^*(t))\right).
    \end{split}
\end{equation*}
If $K(u)+K(v)\leq0$, then~\eqref{eq:K} gives
\begin{equation*}
\frac{d}{dt}\big(\mu(t)w(t)\big)
>
-\frac{\mu(t)B(t)}{y(t^*(t)+\pi)}
\left(K(u)+K(v)\right)\geq0.
\end{equation*}
This contradicts $w(u)=w(v)=0$. Hence $K(u)+K(v)>0$, and the result follows.
\end{proof}

We are now in a position to prove the main theorem.

\begin{proof}[Proof of Theorem~\ref{criterion}]
We first consider case (iii.1). Let $x_0$ be a positive zero of the displacement map $D$. We shall prove that $D'(x_0)>0$.
By Proposition~\ref{prop:stability2},
\begin{equation*}
\sgn\left(D'(x_0)\right)
=
\sgn\left(
\int_{r_1}^{r_2}
\frac{4rU(r)V(r)}
{\left(r^2-R^2(\tau_2(r))\right)
\left(r^2-R^2(\tau_1(r))\right)}\,dr
\right).
\end{equation*}
Since $r=r(\tau_i(r))>|R(\tau_i(r))|$ for $i=1,2$, the denominator of the integrand is positive. Thus, it suffices to prove that $U(r)<0$ and $V(r)<0$ for every $r\in(r_1,r_2)$.

\medskip

{\bf Claim 1.} $V(r)<0$ for every $r\in(r_1,r_2)$.

\medskip

{\bf Proof of Claim 1.}
First, let $r\in(r_1,r_*]$. Then $0\leq\tau_0(r)<t_1<\tau_1(r)<t_2$ and, by the $\pi$-periodicity of $r$, $\tau_2(r)=\tau_0(r)+\pi$. Since $r(\tau_0(r))=r(\tau_1(r))=r$ and $\tau_0(r)<\tau_1(r)$, the strict decrease of $r(t)R(t)$ on $[0,\pi]$ implied by~\eqref{eq:rRmonotone} gives $R(\tau_0(r))>R(\tau_1(r))$. Hence, using the $\pi$-antiperiodicity of $R$,
\begin{equation*}
V(r)=R(\tau_2(r))+R(\tau_1(r))
=-R(\tau_0(r))+R(\tau_1(r))<0.
\end{equation*}

Now let $r\in(r_*,r_2)$. Then $t_1<\tau_1(r)<t_2<\tau_2(r)<\pi$, so $K(\tau_1(r))<0<K(\tau_2(r))$. Suppose, by contradiction, that $V$ has a zero in $(r_*,r_2)$, and let $r_0$ be its smallest zero. Since $V(r_*)<0$, we have $V'(r_0)\geq0$. On the other hand, if $V(r_0)=0$, the identities in the proof of Proposition~\ref{prop:UVzeros} show that the hypotheses of Lemma~\ref{lemma2} are satisfied by $\tau_1(r_0)$ and $\tau_2(r_0)$. Hence $K(\tau_1(r_0))+K(\tau_2(r_0))>0$, and Proposition~\ref{prop:UVzeros} gives
\begin{equation*}
V'(r_0)
=
\frac{K(\tau_1(r_0))+K(\tau_2(r_0))}
{K(\tau_1(r_0))K(\tau_2(r_0))}
\frac{2y(\tau_2(r_0))y(\tau_1(r_0))}{r_0}<0,
\end{equation*}
a contradiction. Therefore $V(r)<0$ for every $r\in(r_1,r_2)$. This proves Claim~1.

\medskip

{\bf Claim 2.} $U(r)<0$ for every $r\in(r_1,r_2)$.

\medskip

{\bf Proof of Claim 2.}
First, let $r\in[r_*,r_2)$. Then $t_1<\tau_1(r)<t_2<\tau_2(r)\leq\pi$ and $r(\tau_1(r))=r(\tau_2(r))=r$. Since $\tau_1(r)<\tau_2(r)$, the strict decrease of $r(t)R(t)$ implied by~\eqref{eq:rRmonotone} gives $R(\tau_1(r))>R(\tau_2(r))$. Hence $U(r)<0$.

It remains to consider $r\in(r_1,r_*)$. In this case, $0<\tau_0(r)<t_1<\tau_1(r)<t_2$, where $\tau_0(r)=\tau_2(r)-\pi$, and $K(\tau_2(r))=-K(\tau_0(r))$. Suppose, by contradiction, that $U$ has a zero in $(r_1,r_*)$, and let $r_0$ be its largest zero. 
Since $U(r_*)<0$, we have $U'(r_0)\leq0$.
Since $U(r_0)=0$, we have $R(\tau_2(r_0))=R(\tau_1(r_0))$. Using $R(\tau_2(r_0))=-R(\tau_0(r_0))$ and $r(\tau_0(r_0))=r(\tau_1(r_0))=r_0$, we obtain
\begin{equation*}
y(\tau_0(r_0))=y(\tau_1(r_0)+\pi),\qquad
y(\tau_1(r_0))=y(\tau_0(r_0)+\pi).
\end{equation*}
Therefore Lemma~\ref{lemma2}, applied to $\tau_0(r_0)$ and $\tau_1(r_0)$, gives $K(\tau_0(r_0))+K(\tau_1(r_0))>0$. Since $K(\tau_0(r_0))>0$ and $K(\tau_1(r_0))<0$, we also have $K(\tau_2(r_0))=-K(\tau_0(r_0))<0$. Hence Proposition~\ref{prop:UVzeros} gives
\begin{equation*}
\begin{split}
U'(r_0)
&=
\frac{K(\tau_1(r_0))-K(\tau_2(r_0))}
{K(\tau_1(r_0))K(\tau_2(r_0))}
\frac{2y(\tau_2(r_0))y(\tau_2(r_0)+\pi)}{r_0}\\
&=
\frac{K(\tau_1(r_0))+K(\tau_0(r_0))}
{K(\tau_1(r_0))K(\tau_2(r_0))}
\frac{2y(\tau_2(r_0))y(\tau_2(r_0)+\pi)}{r_0}>0,
\end{split}
\end{equation*}
which contradicts $U'(r_0)\leq0$. Thus $U(r)<0$ for every $r\in(r_1,r_2)$. This proves Claim~2.

\vspace{4pt}

By Claims~1 and~2, $U(r)V(r)>0$ for $r\in(r_1,r_2)$. Hence Proposition~\ref{prop:stability2} gives $D'(x_0)>0$, so every positive periodic solution is hyperbolic and unstable.
Therefore equation~\eqref{two-term-Abel} has at most one positive limit cycle.

This proves the theorem under (iii.1). The case (iii.2) follows from the time-reversal transformation considered at the beginning of this section. Time reversal preserves hyperbolicity and reverses stability. Hence, under (iii.2), the positive limit cycle, if it exists, is hyperbolic and stable.
\end{proof}

\section{Application to Rigid Systems}
In this section, we provide the proof of Theorem \ref{mainresult-rigid}. 
As stated in the introduction, studying the limit cycles of equation \eqref{rigid equation1} with $a=0$ can be reduced to studying the positive limit cycles of Abel equation \eqref{Abel1}.
Hence, to prove Theorem \ref{mainresult-rigid}, it is enough to show that Abel equation \eqref{Abel1} has at most one positive limit cycle.
For \eqref{Abel1}, we first reduce the number of parameters by means of a phase shift and a rescaling of the dependent variable.  
We then analyze the parameter region in which \eqref{Abel1} has no limit cycles.
The remaining parameter region will satisfy precisely the half-period sign and convexity conditions of Theorem \ref{criterion}.  
Therefore, there exists at most one limit cycle.
Finally, the sharpness can be shown by an Andronov-Hopf bifurcation occurring at the origin.

Note that when $b=c=0$, equation \eqref{Abel1} reduces to a differential equation with separable variables, so no limit cycles exist in this case. 
Consequently, we shall only consider $b^2+c^2\neq 0$.
The following result provides the normal form of equation \eqref{Abel1}, reducing the original five parameters to three.
\begin{proposition}\label{normalform} 
Assume that $b^2+c^2\neq 0$.
Equation~\eqref{Abel1} is equivalent to
\begin{equation}\label{mainAbel}
\frac{dr}{d\theta} =  \sin \theta\, r^2 + \left(\alpha + \beta \cos(2 \theta)+\gamma\sin(2 \theta) \right)r^3,
\end{equation}
where
$$
\alpha = \frac{d+f}{2(b^2+c^2)},\quad \beta = \frac{(d-f)(c^{2}-b^{2})-2bce}{2(b^2+c^2)^{2}},\quad \gamma = \frac{2bc(d-f)+e(c^{2}-b^{2})}{2(b^2+c^2)^{2}}.
$$
\end{proposition}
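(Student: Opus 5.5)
The plan is to combine a phase shift in $\theta$ with a rescaling of $r$. The shift will rotate the linear form $b\cos\theta+c\sin\theta$ into a multiple of $\sin\theta$. The rescaling will normalize that multiple to $1$. Afterwards the quadratic form in the cubic coefficient is rewritten with double-angle formulas.

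\textbf{Step 1 (phase shift).} Write $b\cos\theta+c\sin\theta=\rho\sin(\theta+\varphi)$, where $\rho=\sqrt{b^2+c^2}>0$, $\sin\varphi=b/\rho$ and $\cos\varphi=c/\rho$. Set $\psi=\theta+\varphi$. Then $dr/d\psi=dr/d\theta$, and the quadratic coefficient becomes $\rho\sin\psi$. The cubic coefficient becomes $Q(\theta)=d\cos^2\theta+e\cos\theta\sin\theta+f\sin^2\theta$ evaluated at $\theta=\psi-\varphi$.

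\textbf{Step 2 (rescaling).} Put $r=\lambda s$ with $\lambda=1/\rho$. Then
\[
\frac{ds}{d\psi}=\lambda\rho\sin\psi\, s^2+\lambda^2 Q(\psi-\varphi)s^3=\sin\psi\, s^2+\frac{Q(\psi-\varphi)}{b^2+c^2}s^3 .
\]

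\textbf{Step 3 (cubic coefficient).} Write
\[
Q(\theta)=\frac{d+f}{2}+\frac{d-f}{2}\cos2\theta+\frac e2\sin2\theta .
\]
Substitute $\theta=\psi-\varphi$ and expand $\cos(2\psi-2\varphi)$ and $\sin(2\psi-2\varphi)$. This needs
\[
\cos2\varphi=\frac{c^2-b^2}{b^2+c^2},\qquad \sin2\varphi=\frac{2bc}{b^2+c^2}.
\]
The coefficient of $\cos2\psi$ is then $\frac{1}{2}\big[(d-f)\cos2\varphi-e\sin2\varphi\big]$. The coefficient of $\sin2\psi$ is $\frac12\big[(d-f)\sin2\varphi+e\cos2\varphi\big]$. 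Dividing by $b^2+c^2$ gives exactly the stated $\alpha$, $\beta$ and $\gamma$.

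\textbf{Equivalence.} Both changes of variables are affine diffeomorphisms, and the rescaling has $\lambda>0$. They therefore map $2\pi$-periodic solutions to $2\pi$-periodic solutions, preserve positivity, preserve isolatedness, and preserve stability. So the two equations have the same positive limit cycles.

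The only delicate step is the sign bookkeeping in the double-angle expansion, which must reproduce the signs $-2bce$ and $+2bc(d-f)$. The choice $\sin\varphi=b/\rho$, $\cos\varphi=c/\rho$ is forced by $b\cos\theta+c\sin\theta=\rho\sin(\theta+\varphi)$, so I expect the signs to come out exactly as stated.
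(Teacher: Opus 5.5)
Your proposal is correct and follows essentially the same route as the paper: a phase shift by the angle with $\sin = b/\sqrt{b^2+c^2}$ and $\cos = c/\sqrt{b^2+c^2}$, a double-angle rewriting of the cubic coefficient, and the rescaling $r\mapsto\sqrt{b^2+c^2}\,r$. The signs you derive agree with the stated $\beta$ and $\gamma$. The only differences are that you rescale before expanding the double angles, which is immaterial, and that you add a short justification of why the changes of variables preserve positive limit cycles, which the paper leaves implicit.
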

\begin{proof}
    Let $\theta_0\in[0,2\pi]$ be such that $\sin \theta_0 = b/\sqrt{b^2+c^2}$, $\cos \theta_0 = c/\sqrt{b^2+c^2}$.
Then the change of variables $\theta\mapsto \theta-\theta_0$ transforms \eqref{Abel1} into
\begin{equation*}
\frac{dr}{d\theta} = \sqrt{b^2+c^2} \sin \theta\ r^2 + \left(d\cos^2 (\theta-\theta_{0})+e\cos (\theta-\theta_{0})\sin (\theta-\theta_{0})+f\sin^2 (\theta-\theta_{0})\right)r^3.
\end{equation*}
Write the coefficients of $r^{3}$ in double-angle form and we have
\begin{equation*}
\frac{dr}{d\theta} = \sqrt{b^2+c^2} \sin \theta\ r^2 + \left(\hat{\alpha}+\hat{\beta}\cos(2\theta)+\hat{\gamma}\sin(2\theta)\right)r^3,
\end{equation*}
where 
$$\hat{\alpha}=\frac{d+f}{2},\quad \hat{\beta}=\frac{(d-f)(c^{2}-b^{2})-2bce}{2(b^2+c^2)},\quad \hat{\gamma}=\frac{2bc(d-f)+e(c^{2}-b^{2})}{2(b^2+c^2)}.$$
Finally, the rescaling $r\mapsto\sqrt{b^2+c^2}\,r$ gives \eqref{mainAbel}.
\end{proof}

Next, we consider equation \eqref{mainAbel}.
The change $\theta\mapsto -\theta$ transforms \eqref{mainAbel} into 
$$\frac{dr}{d\theta} =  \sin(\theta) r^2 + \left(-\alpha - \beta \cos(2 \theta)+\gamma\sin(2 \theta) \right)r^3.$$ 
Thus, without loss of generality, we may assume $\alpha+\beta\geq0$.
Moreover, we define 
\begin{equation*}
    A(\theta):=\alpha + \beta \cos(2 \theta)+\gamma\sin(2 \theta)\quad \text{and}\quad B(\theta):=\sin\theta.
\end{equation*}
If $\alpha^2\geq\beta^2+\gamma^2$, the coefficient
$A$ does not change sign, and by classical criterion \cite{Pliss,GL}, it follows that the equation has at most one limit cycle.
Therefore, we assume that $\alpha^2<\beta^2+\gamma^2$, that is, $A$ changes sign on $(0,\pi)$.

To identify the parameter region in which a limit cycle may occur, 
we first study the stability of $r=0$.
In addition to determining the center condition, the next lemma will be used at the end of the proof to show that the upper bound in Theorem \ref{mainresult-rigid} is sharp.
\begin{lemma}\label{lyapunov-center}
For equation \eqref{mainAbel}, the first two Lyapunov quantities of $r=0$ are $\alpha$ and $\beta$, respectively.
Furthermore, $r=0$ is a center if and only if $\alpha=\beta=0$.
\end{lemma}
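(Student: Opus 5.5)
The plan is to compute the return map of \eqref{mainAbel} near $r=0$ as a power series in the initial value. Write $r(\theta;\rho)=\sum_{k\ge1}u_k(\theta)\rho^k$ with $u_1\equiv1$ and $u_k(0)=0$ for $k\ge2$. Substituting into \eqref{mainAbel} gives recursively
\[
u_2'=\sin\theta,\qquad u_3'=2u_2\sin\theta+A(\theta),\qquad u_4'=(2u_3+u_2^2)\sin\theta+3u_2A(\theta),\ \dots
\]
with $A(\theta)=\alpha+\beta\cos(2\theta)+\gamma\sin(2\theta)$. The Lyapunov quantities are, up to positive factors, the first nonvanishing coefficients $u_k(2\pi)$ after the previous ones are set to zero.

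First I compute $u_2=1-\cos\theta$, which is periodic, so $u_2(2\pi)=0$. Next, $u_3(2\pi)=\int_0^{2\pi}\bigl(2(1-\cos\theta)\sin\theta+A\bigr)d\theta=2\pi\alpha$, so the first quantity is $\alpha$. Setting $\alpha=0$, I go to the next odd order. The standard fact for Abel equations is that the first nonzero coefficient occurs at odd order once the preceding ones vanish. So I compute $u_4(2\pi)$, which should vanish, and then $u_5(2\pi)$, expecting it to be a nonzero multiple of $\beta$.

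For this computation it is convenient to use the Abel-equation formula from the literature (Alwash--Lloyd type): with $\tilde B(\theta)=\int_0^\theta\sin s\,ds=1-\cos\theta$ and $\tilde A=\int_0^\theta A$, the second quantity is proportional to $\int_0^{2\pi}A(\theta)\tilde B(\theta)^2\,d\theta$ when $\int_0^{2\pi} A=0$, up to a combination of exact terms. Direct integration gives
\[
\int_0^{2\pi}(\beta\cos2\theta+\gamma\sin2\theta)(1-\cos\theta)^2\,d\theta=\frac{\pi\beta}{2}.
\]
The $\gamma$ term drops out by the symmetry $\theta\mapsto-\theta$. So the second quantity is a nonzero multiple of $\beta$. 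The main obstacle is carrying out the fifth-order recursion carefully, or justifying the quoted formula. I would do the recursion explicitly with $\alpha=0$, where all $u_k$ are trigonometric polynomials.

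For the center characterization, necessity follows from the above: a center forces $\alpha=\beta=0$. For sufficiency, when $\alpha=\beta=0$ we have $A=\gamma\sin2\theta=2\gamma\sin\theta\cos\theta$. Then both coefficients are composition-type functions of $\cos\theta$: $\sin\theta=g_1'(\cos\theta)\cdot(\cos\theta)'$ and $A=g_2'(\cos\theta)\cdot(\cos\theta)'$ for polynomials $g_1,g_2$. Hence the composition condition holds, and $r=0$ is a center. An alternative argument is that the equation is reversible under $\theta\mapsto-\theta$, so every solution near $0$ is periodic.
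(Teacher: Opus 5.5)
Your plan is correct and follows essentially the same route as the paper: the same power-series recursion, $u_3(2\pi)=2\pi\alpha$, $u_4(2\pi)=\int_0^{2\pi}A(1-\cos\theta)\,d\theta=2\pi\alpha$, and $u_5(2\pi)=\tfrac32\,u_3(2\pi)^2+\int_0^{2\pi}A(1-\cos\theta)^2\,d\theta$. In particular, the formula you quoted is exactly what the fifth-order recursion gives after integrating by parts with $u_2(2\pi)=0$. The center part is settled by the same reversibility under $\theta\mapsto-\theta$ (equivalently $\theta\mapsto 2\pi-\theta$) that the paper uses. One caveat: the ``first nonzero coefficient is odd'' fact you invoke is false for general Abel equations (a nonzero $\int_0^{2\pi}B$ already gives a nonzero quadratic coefficient). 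It holds here only because of the half-period symmetry, and you do not need it anyway, since $u_4(2\pi)$ is computed directly.
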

\begin{proof}
Let $D(r_{0})=r(2\pi;r_{0})-r_{0}$ be the displacement map of \eqref{mainAbel}. 
For sufficiently small $r_0$, analytic dependence on the initial value allows us to write the solution with $r(0;r_0)=r_0$ as
\begin{equation*}
r(\theta;r_0)=r_0+\sum_{j=2}^{5}h_j(\theta)r_0^j+O(r_0^6),
\qquad h_j(0)=0,
\end{equation*}
uniformly for $\theta\in[0,2\pi]$.
Substituting this expansion into \eqref{mainAbel} and comparing coefficients of $r_0^j$ gives
\begin{equation*}
\begin{aligned}
\frac{dh_2}{d\theta}&=B,\\
\frac{dh_3}{d\theta}&=2Bh_2+A,\\
\frac{dh_4}{d\theta}&=B(2h_3+h_2^2)+3Ah_2,\\
\frac{dh_5}{d\theta}&=B(2h_4+2h_2h_3)+A(3h_3+3h_2^2).
\end{aligned}
\end{equation*}
The first two equations give
\begin{equation*}
h_2(\theta)=1-\cos\theta,\qquad
h_3(\theta)=(1-\cos\theta)^2+\int_0^\theta A(s)\,ds.
\end{equation*}
In particular, $h_2(0)=h_2(2\pi)=0$.
Integrating the recurrence over $[0,2\pi]$ and using integration by parts, we obtain
\begin{equation*}
\begin{aligned}
h_3(2\pi)&=\int_0^{2\pi}A(\theta)\,d\theta=2\pi\alpha,\\
h_4(2\pi)&=\int_0^{2\pi}A(\theta)(1-\cos\theta)\,d\theta=2\pi\alpha,\\
h_5(2\pi)&=\frac32h_3(2\pi)^2
+\int_0^{2\pi}A(\theta)(1-\cos\theta)^2\,d\theta\\
&=6\pi^2\alpha^2+3\pi\alpha+\frac{\pi}{2}\beta,
\end{aligned}
\end{equation*}
where the integrals are evaluated by trigonometric orthogonality, using
$(1-\cos\theta)^2=\frac32-2\cos\theta+\frac12\cos(2\theta)$.
Since $D(r)=\sum_{j=2}^{5}h_j(2\pi)r^j+O(r^6)$, it follows that
\begin{equation}\label{displacement-mainAbel}
 D(r)=2\pi\alpha r^3+2\pi\alpha r^4+
 \left(6\pi^2\alpha^2+3\pi\alpha+\frac{\pi}{2}\beta\right)r^5
 +O(r^6).
\end{equation}
The first nonzero coefficient is $2\pi\alpha$ when $\alpha\neq0$.
For $\alpha=0$, the next possible nonzero coefficient is $\frac{\pi}{2}\beta$.
Thus, after normalization by positive constants, the first two Lyapunov quantities are $\alpha$ and $\beta$.
In particular, a necessary condition for $r=0$ to be a center is $\alpha=\beta=0$.
Conversely, if $\alpha=\beta=0$, both coefficients $\sin\theta$ and $\gamma\sin(2\theta)$ change sign under the reflection $\theta\mapsto2\pi-\theta$.
Hence the flow on $[\pi,2\pi]$ is the inverse of the flow on $[0,\pi]$, so $D(r)=0$ for all sufficiently small $r$ and $r=0$ is a center.
\end{proof}

Next, we study the parameter region for which equation \eqref{mainAbel} has no limit cycles.
We first recall the following nonexistence criterion for limit cycles introduced by Bravo and Torregrosa \cite{BT}.
\begin{lemma}[{\cite[Theorem~2.8]{BT}}]\label{lemma:BT-nonexistence}
Consider
\begin{equation*}
\frac{dr}{d\theta}=(f(\theta)+g(\theta))r^2+h(\theta)r^3,
\end{equation*}
where $f,g,h$ are continuous and $2\pi$-periodic, with $f\not\equiv0$ and $h\not\equiv0$.
Assume that the following conditions hold:
\begin{enumerate}
\item[(i)] $f(\pi-\theta)=-f(\theta)$, and $f$ has a fixed sign on $(\pi/2,3\pi/2)$;
\item[(ii)] $g(-\theta)=-g(\theta)$, and $g(\theta)+g(\pi-\theta)$ has a fixed sign on $(0,\pi/2)$ and is not identically zero there;
\item[(iii)] $h(-\theta)=-h(\theta)$ and $h(\pi-\theta)=-h(\theta)$, and $h$ has a fixed sign on $(0,\pi/2)$.
\end{enumerate}
Then the equation admits no positive periodic solution.
\end{lemma}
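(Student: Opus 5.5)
This is a result cited from Bravo and Torregrosa \cite[Theorem~2.8]{BT}, so within this paper it is taken as known. A self-contained proof would argue by contradiction: assume a positive $2\pi$-periodic solution $r(\theta)$ exists, and use the symmetries to build a quantity whose change over a period has a strict sign. In the variable $y=1/r$ the equation becomes linear in the $r^2$ coefficients:
\[
y'=-(f+g)-h/y .
\]
Integrating over one period gives
\[
0=\int_0^{2\pi}\Bigl(f+g+\frac{h}{y}\Bigr)\,d\theta .
\]
Since $f$ and $g$ have zero mean by their symmetries, this reduces to $\int_0^{2\pi} h/y\,d\theta=0$.

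The main step is to pair the points $\theta$, $\pi-\theta$, $-\theta$ and $\pi+\theta$. The involutions $\theta\mapsto\pi-\theta$ (under which $f$ and $h$ are odd) and $\theta\mapsto-\theta$ (under which $g$ and $h$ are odd) let us reduce all integrals to $(0,\pi/2)$. On that interval $h$ has a fixed sign, so the sign of $\int h/y$ is controlled by the combination
\[
\frac1{y(\theta)}-\frac1{y(\pi-\theta)}-\frac1{y(-\theta)}+\frac1{y(\pi+\theta)} .
\]
To fix the sign of this combination I would compare $y$ with its reflections. Set $w_1(\theta)=y(\theta)-y(\pi-\theta)$ and $w_2(\theta)=y(-\theta)-y(\pi+\theta)$, and write linear differential inequalities for them. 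The forcing terms come from $g(\theta)+g(\pi-\theta)$, which has a fixed sign by (ii), and from $f$, which has a fixed sign on $(\pi/2,3\pi/2)$ by (i). As in the proof of Lemma~\ref{lemma2}, an integrating factor should then show that the $w_i$ have fixed signs on $(0,\pi/2)$.

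The obstacle is arranging these comparisons so that the $f$-terms cancel exactly while the $g$-terms combine into $g(\theta)+g(\pi-\theta)$, and doing so consistently with the boundary values at $0$ and $\pi/2$ supplied by periodicity. Once that is done, the signs obtained contradict $\int h/y=0$, using that $g(\theta)+g(\pi-\theta)\not\equiv0$ for strictness. In the paper the lemma would be invoked directly, with $f$, $g$, $h$ chosen from the $\sin\theta$ and $\gamma\sin 2\theta$ parts of \eqref{mainAbel}.
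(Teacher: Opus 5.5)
The paper gives no proof of this lemma; it simply quotes Bravo--Torregrosa, so your opening sentence matches its treatment. The problem is the self-contained sketch, which does not close as described. Your reductions are correct: $\int_0^{2\pi}h/y\,d\theta=0$, and this integral equals $\int_0^{\pi/2}h(\theta)\,C(\theta)\,d\theta$ with $C$ your four-term combination. But the comparisons you propose cannot give $C$ a sign. With $G(\theta):=g(\theta)+g(\pi-\theta)$, a direct computation gives
\[
w_1'=-G+\frac{h(\theta)}{y(\theta)\,y(\pi-\theta)}\,w_1,\qquad
w_2'=-G+\frac{h(\theta)}{y(-\theta)\,y(\pi+\theta)}\,w_2 .
\]
So the $f$-terms cancel identically in both, and there is no obstacle there. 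Also $w_1(\pi/2)=w_2(\pi/2)=0$ by periodicity, hence $w_1$ and $w_2$ both have the sign of $G$ on $(0,\pi/2)$. However,
\[
C(\theta)=-\frac{w_1(\theta)}{y(\theta)\,y(\pi-\theta)}+\frac{w_2(\theta)}{y(-\theta)\,y(\pi+\theta)},
\]
so the two pieces enter with opposite signs and nothing follows about $\int h/y$. Note also that hypothesis (i) is never used. This alone shows the argument cannot be complete: with $f\equiv0$, the equation $r'=\sin\theta\,r^2+\sin(2\theta)\,r^3$ satisfies (ii) and (iii), is reversible under $\theta\mapsto-\theta$, and has a center (it is \eqref{mainAbel} with $\alpha=\beta=0$).

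What you are missing is the matching condition at $\theta=0$, namely $w_1(0)=w_2(0)=y(0)-y(\pi)$, together with a second pair of comparisons in which $f$ does appear. Glue $w_1$ and $w_2$ into $q(\theta)=y(\theta)-y(\pi-\theta)$ on $[-\pi/2,\pi/2]$, so that $q=w_1$ for $\theta\ge0$ and $q(\theta)=w_2(-\theta)$ for $\theta\le0$. Then $q'=-G+p\,q$ with $p(\theta)=h(\theta)r(\theta)r(\pi-\theta)$, $q(\pm\pi/2)=0$, and $G$ odd. With $\mu(\theta)=\exp\bigl(-\int_0^\theta p\bigr)$, the integrating-factor argument of Lemma~\ref{lemma2} gives
\[
0=\int_0^{\pi/2}G(s)\bigl(\mu(s)-\mu(-s)\bigr)\,ds,\qquad
\ln\frac{\mu(s)}{\mu(-s)}=-\int_0^s h(\sigma)\bigl(r(\sigma)r(\pi-\sigma)-r(-\sigma)r(\pi+\sigma)\bigr)\,d\sigma .
\]
This is where (i) enters. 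The differences $y(\theta)-y(-\theta)$ and $y(\pi-\theta)-y(\pi+\theta)$ satisfy linear equations in which $g$ cancels, with forcing $-(f(\theta)+f(-\theta))$ of fixed sign on $(0,\pi/2)$ and value $0$ at $\theta=0$. Hence $r(\sigma)-r(-\sigma)$ and $r(\pi-\sigma)-r(\pi+\sigma)$ both have the sign of $f$ on $(-\pi/2,\pi/2)$. Consequently $G(s)\bigl(\mu(s)-\mu(-s)\bigr)$ has a fixed sign, and once it is nonzero somewhere you have your contradiction; the identity $\int h/y=0$ is not needed.

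Be careful with that strictness step: it needs genuinely strict sign information, as in the application, where $f$, $G$, $h$ do not vanish on the open intervals. With weak signs only, the statement as transcribed can fail. Take $h=\sin2\theta$, $f$ supported in small neighbourhoods of $\pm\pi/2$ and satisfying (i), and $g$ odd, supported near $0$ and nonnegative on $(0,\pi/2)$. Then the period map is a composition of four reversible pieces, around $0,\pi/2,\pi,3\pi/2$, hence it is the identity.
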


Applying the above nonexistence criterion for limit cycles together with the comparison principle, we obtain the following result.
\begin{proposition}\label{nonexistence-mainAbel}
Equation \eqref{mainAbel} has no positive limit cycles if
$\alpha\beta\geq0$ or $|\alpha|\geq|\beta|$.
\end{proposition}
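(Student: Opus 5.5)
Two cases, each combining Lemma~\ref{lemma:BT-nonexistence} with a comparison argument. We use the normalization $\alpha+\beta\ge 0$ and write $A=\alpha+\beta\cos 2\theta+\gamma\sin 2\theta$, $B=\sin\theta$.

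\textbf{Step 1: the reference equation.} For $\alpha=\beta=0$, i.e.\ $dr/d\theta=\sin\theta\,r^2+\gamma\sin(2\theta)r^3$, all solutions near $0$ are periodic by the reflection $\theta\mapsto 2\pi-\theta$ (see the proof of Lemma~\ref{lyapunov-center}). To exploit the sign of $\alpha$ and $\beta$ separately, split the cubic coefficient. The term $\alpha+\beta\cos 2\theta$ is even in $\theta$. The term $\gamma\sin 2\theta$ is odd and satisfies $h(\pi-\theta)=-h(\theta)$, so it is the kind of $h$ allowed in Lemma~\ref{lemma:BT-nonexistence}(iii).

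\textbf{Step 2: the case $\alpha\beta\ge 0$.} Here $\alpha,\beta\ge 0$, not both zero; the center case $\alpha=\beta=0$ has no isolated cycles. Compare with the center equation. Let $r(\theta)$ be a positive periodic solution and substitute $y=1/r$, so $y'=-\sin\theta-A(\theta)/y$. Integrating the stability expression, or better, using the displacement-map identity, we want to show that the periodicity condition forces a contradiction.

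A cleaner route is the change $\theta\mapsto\theta+\pi$ combined with Lemma~\ref{lemma:BT-nonexistence}. Try to fit the equation into that lemma with $f=\sin\theta$, which satisfies $f(\pi-\theta)=f(\theta)$ rather than $-f(\theta)$. So first shift $\theta\mapsto\theta+\pi/2$: then $B=\cos\theta$ satisfies $f(\pi-\theta)=-f(\theta)$ and has fixed sign on $(\pi/2,3\pi/2)$. The cubic coefficient becomes $\alpha-\beta\cos 2\theta-\gamma\sin 2\theta$. Its odd part $-\gamma\sin2\theta$ fits condition (iii). The leftover even part $\alpha-\beta\cos2\theta$ is not of the lemma's form, since the lemma has no even cubic term.

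So handle the even part by comparison. Solutions of the equation with cubic coefficient $h+e$, where $e=\alpha-\beta\cos2\theta$, are compared with solutions of the lemma-type equation. Replace $e\,r^3$ by a quadratic-in-$r$ perturbation $g$, or bound the Poincaré map: if $e$ has a sign-definite effect, the return map is monotonically shifted relative to the center flow.

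\textbf{Step 3: the case $|\alpha|\ge|\beta|$.} Here $\alpha+\beta\cos 2\theta$ has the sign of $\alpha$. Compare with the center equation directly. Since $\partial_r$ of the right-hand side increases pointwise with the added term $(\alpha+\beta\cos2\theta)r^3\ge0$ (say $\alpha>0$), comparison gives $r(2\pi;r_0)>r^{c}(2\pi;r_0)=r_0$ for all $r_0$ in the common domain. This uses that the center flow has all orbits periodic wherever defined, which follows from the reflection symmetry globally. Hence $D>0$ and there is no periodic solution. The same argument covers Step 2 when $\beta\ge0$ and $\alpha\ge0$ wherever $\alpha+\beta\cos2\theta\ge0$ holds. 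When that fails ($\beta>\alpha\ge0$), use the symmetric structure: split the time interval by $\theta\mapsto\pi-\theta$. The $\beta\cos2\theta$ part is then paired via Lemma~\ref{lemma:BT-nonexistence} after the $\pi/2$ shift, with $f=\cos\theta$ and the $\cos2\theta$ contribution absorbed via the substitution $r\mapsto r/(1+\lambda r\,\cdot)$ into a $g$ term satisfying (ii).

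\textbf{Main obstacle.} Converting the even cubic term $\beta\cos2\theta$ into the odd quadratic perturbation $g$ required in Lemma~\ref{lemma:BT-nonexistence}. I expect this needs a Cherkas-type change $r\mapsto r/(1-\phi(\theta)r)$ and careful sign checks.
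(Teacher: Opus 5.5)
You handle the case $|\alpha|\ge|\beta|$ correctly. Two small fixes: it is the right-hand side itself, not its $\partial_r$, that increases. Also, the center solution through the initial value of a positive periodic solution is trapped between $0$ and that solution, so it exists on $[0,2\pi]$ and returns to its initial value by the reflection symmetry. This is equivalent to the paper's comparison of $r(\theta)$ with its reflection $r(2\pi-\theta)$.

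The genuine gap is the rest of the case $\alpha\beta\ge0$, namely $0\le\alpha<\beta$ after your normalization. There $\alpha+\beta\cos2\theta$ changes sign, so comparison with the center equation gives nothing, and neither of your remedies works. The shift $\theta\mapsto\theta+\pi/2$ is exactly the degenerate choice that removes the $\sin\theta$ component of the quadratic coefficient. It gives $g\equiv0$, so hypothesis (ii) of Lemma~\ref{lemma:BT-nonexistence} fails, and the even cubic part is still left over. A Cherkas-type substitution $\rho=r/(1+\phi r)$ gives $\rho'=(B-\phi')\rho^2+A\rho^3/(1-\phi\rho)$. This is no longer of the lemma's polynomial form, so it cannot trade $\beta\cos(2\theta)\,r^3$ for a quadratic term $g$.

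The missing idea is to handle $\alpha=0$ first, with a phase shift adapted to $(\beta,\gamma)$. Choose $\delta$ with $\sin2\delta=-\beta/\sqrt{\beta^2+\gamma^2}$ and $\cos2\delta=\gamma/\sqrt{\beta^2+\gamma^2}$. Then $\theta\mapsto\theta+\delta$ turns the cubic coefficient into $\sqrt{\beta^2+\gamma^2}\,\sin2\theta$, which satisfies (iii). The quadratic coefficient $\sin(\theta+\delta)=\sin\delta\cos\theta+\cos\delta\sin\theta$ splits as $f+g$ satisfying (i) and (ii), since $\sin\delta\cos\delta=\tfrac12\sin2\delta\neq0$ for $\beta\neq0$. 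Hence at $\alpha=0$ there is no positive periodic solution, and the displacement map has constant sign on its interval domain. By \eqref{displacement-mainAbel}, where $D=\tfrac{\pi}{2}\beta r^5+O(r^6)$ at $\alpha=0$, that sign is $\sgn\beta$.

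Only then compare, using the $\alpha=0$ equation rather than the center equation as reference. For $\alpha>0$ and $\beta>0$, the right-hand side increases with $\alpha$ when $r>0$. So the $\alpha=0$ solution through the initial value of a putative positive periodic solution stays below it, exists on $[0,2\pi]$, and has nonpositive displacement. This contradicts the positivity just established.
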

\begin{proof}
For $\alpha=\beta=0$, by Lemma \ref{lyapunov-center} there is no limit cycles.
For $\alpha=0$ and $\beta\neq0$, choose $\delta$ such that
\begin{equation*}
\sin(2\delta)=-\frac{\beta}{\sqrt{\beta^2+\gamma^2}},
\qquad
\cos(2\delta)=\frac{\gamma}{\sqrt{\beta^2+\gamma^2}}.
\end{equation*}
The map $\theta\mapsto\theta+\delta$ puts \eqref{mainAbel} in the form of Lemma~\ref{lemma:BT-nonexistence}, with
\begin{equation*}
f(\theta)=\sin\delta\cos\theta,\qquad
g(\theta)=\cos\delta\sin\theta,\qquad
h(\theta)=\sqrt{\beta^2+\gamma^2}\sin(2\theta).
\end{equation*}
Since $\sin\delta\cos\delta\neq0$ and $g(\theta)+g(\pi-\theta)=2\cos\delta\sin\theta$, all the hypotheses of Lemma \ref{lemma:BT-nonexistence} hold.
So, equation \eqref{mainAbel} has no positive periodic solution. 
Hence the displacement function has no zero in the positive domain.
Since this domain is an interval, the displacement has a constant
sign there. Formula \eqref{displacement-mainAbel} shows that this
sign is precisely the sign of $\beta$.
Furthermore, by the comparison principle, it is easy to see that when $\alpha\beta\geq0$, the equation has no limit cycles.

Finally, set $\widetilde A(\theta)=-A(2\pi-\theta)$.
The reflection $\theta\mapsto2\pi-\theta$ transforms \eqref{mainAbel} into
\begin{equation}\label{8964}
\frac{dr}{d\theta}
=\widetilde A(\theta)r^3+B(\theta)\,r^2
=\bigl(-\alpha-\beta\cos(2\theta)+\gamma\sin(2\theta)\bigr)r^3+\sin\theta\,r^2.
\end{equation}
If $|\alpha|\geq|\beta|$ and $(\alpha,\beta)\neq(0,0)$, then
\begin{equation*}
A(\theta)-\widetilde A(\theta)
=A(\theta)+A(2\pi-\theta)=2\alpha+2\beta\cos(2\theta)
\end{equation*}
has a fixed sign and is not identically zero on $[0,2\pi]$.
Suppose that $r(\theta)$ is a positive periodic solution of \eqref{mainAbel}.
Then $\widetilde r(\theta):=r(2\pi-\theta)$ is a positive periodic solution of the reflected equation \eqref{8964}, and periodicity gives
$\widetilde r(0)=r(0)$ and $\widetilde r(2\pi)=r(2\pi)$.
However, the two right-hand sides differ by $(A(\theta)-\widetilde A(\theta))r^3$.
Since both solutions are positive and have the same initial value, strict comparison yields
$r(2\pi)>\widetilde r(2\pi)$ if $\alpha>0$, and
$r(2\pi)<\widetilde r(2\pi)$ if $\alpha<0$.
This contradicts the endpoint identities above, so no positive periodic solution exists.
\end{proof}

{Summarizing, our space of parameters is $\alpha+\beta>0$ and it is already known that~\eqref{8964} has at most one positive limit cycle when $\alpha^2<\beta^2+\gamma^2$, and it has no limit cycles when $\alpha\beta \leq 0$ or $|\alpha|\leq |\beta|$. Note that $|\alpha|\leq |\beta|$ implies $\alpha^2<\beta^2\leq\beta^2+\gamma^2$.} 
Therefore,
to study the uniqueness of limit cycles we only need to consider the following parameter region~
(see Figure~\ref{fig:alpha-beta-regions}):
\begin{itemize}
    \item[(H)] $\alpha<0$, and {$\beta>-\alpha$}.
\end{itemize}
\begin{figure}[t]
\centering
\begin{tikzpicture}[x=1cm,y=1cm]
  \fill[pattern=north east lines,pattern color=black!45]
    (0,0) rectangle (3.2,3.2);
  \fill[pattern=north east lines,pattern color=black!45]
    (0,0)--(3.2,0)--(3.2,-3.2)--cycle;
  \draw[thick] (-3.35,3.35)--(3.35,-3.35);
  \node[above right] at (-3.2,3.2) {$\beta=-\alpha$};
  \draw[->,thick] (-3.65,0)--(3.7,0) node[right] {$\alpha$};
  \draw[->,thick] (0,-3.65)--(0,3.7) node[above] {$\beta$};
  \node[fill=white,inner sep=1pt,below right] at (0,0) {$0$};
  \node[font=\large] at (-1.1,2.2) {$(H)$};
\end{tikzpicture}
\caption{Parameter regions for \eqref{mainAbel} in the half-plane $\alpha+\beta\geq0$. 
In the hatched region, \eqref{mainAbel} has no positive limit cycles.
In the region $(H)$, \eqref{mainAbel} has at most one positive limit cycle.}
\label{fig:alpha-beta-regions}
\end{figure}

By applying Theorem \ref{criterion}, we have the following result.
\begin{proposition}\label{at most one for H}
Under condition $(H)$, equation \eqref{mainAbel} has at most one positive limit cycle.
Moreover, if it exists, it is hyperbolic and unstable.
\end{proposition}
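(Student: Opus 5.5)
The plan is to verify the hypotheses of Theorem~\ref{criterion}, in the alternative (iii.1), for
\[
A(\theta)=\alpha+\beta\cos(2\theta)+\gamma\sin(2\theta),\qquad B(\theta)=\sin\theta .
\]
Once they hold, Theorem~\ref{criterion} gives at most one positive limit cycle, and shows that such a cycle is hyperbolic and unstable.

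\emph{Condition (i).} This is immediate: $A$ is $\pi$-periodic, $B(\theta+\pi)=-B(\theta)$, and $\sin\theta>0$ on $(0,\pi)$.

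\emph{Condition (ii) and the sign pattern.} Write $A(\theta)=\alpha+\rho\cos(2\theta-\varphi)$ with $\rho=\sqrt{\beta^2+\gamma^2}$. Under $(H)$ we have $\rho\ge\beta>-\alpha=|\alpha|$. Hence $A$ has exactly two simple zeros in each interval of length $\pi$. Moreover $A(0)=A(\pi)=\alpha+\beta>0$, so neither zero lies at $0$ or $\pi$. Both zeros $t_1<t_2$ therefore lie in $(0,\pi)$, with $A>0$ on $(0,t_1)\cup(t_2,\pi)$ and $A<0$ on $(t_1,t_2)$. This is the sign configuration of (iii.1).

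\emph{Condition (iii.1): convexity of $\widehat K$.} Here $z(\theta)=1-\cos\theta$, which maps $(0,\pi)$ onto $(0,2)$. Put $s=1-z=\cos\theta\in(-1,1)$, so that $\sin\theta=\sqrt{1-s^2}$, $\cos 2\theta=2s^2-1$ and $\sin2\theta=2s\sqrt{1-s^2}$. Then
\[
\widehat K=\frac{\alpha-\beta+2\beta s^2}{\sqrt{1-s^2}}+2\gamma s .
\]
The term $2\gamma s$ is affine in $z$, so it does not affect convexity. Since $s$ is an affine function of $z$, convexity in $z$ is the same as convexity in $s$. Using $2\beta s^2=2\beta-2\beta(1-s^2)$, the remaining part becomes
\[
(\alpha+\beta)(1-s^2)^{-1/2}-2\beta(1-s^2)^{1/2}.
\]
The function $(1-s^2)^{-1/2}$ is strictly convex on $(-1,1)$, and its coefficient $\alpha+\beta$ is positive by $(H)$. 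The function $-(1-s^2)^{1/2}$ is convex, because $\sqrt{1-s^2}$ is concave, and its coefficient $2\beta$ is positive. Therefore $\widehat K$ is strictly convex on $(0,2)$, which completes the verification of (iii.1).

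The only step needing care is this convexity check; the decomposition above avoids computing second derivatives directly. With all hypotheses verified, Theorem~\ref{criterion} yields the proposition.
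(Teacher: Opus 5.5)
Your proposal is correct and follows the paper's proof. Both arguments verify (i), (ii) and (iii.1) of Theorem~\ref{criterion} with $z=1-\cos\theta$ and the same expression for $\widehat K$. The only difference is cosmetic: the paper computes $\widehat K''(z)=\bigl(\alpha+3\beta+2\alpha(1-z)^2\bigr)/[z(2-z)]^{5/2}\ge 3(\alpha+\beta)/[z(2-z)]^{5/2}>0$ directly, whereas your splitting $\widehat K=(\alpha+\beta)(1-s^2)^{-1/2}-2\beta(1-s^2)^{1/2}+2\gamma s$, using $\beta>0$ under $(H)$, reaches strict convexity without computing $\widehat K''$ explicitly.
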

\begin{proof}
Recall that for equation \eqref{mainAbel}, the coefficients are
\begin{equation*}
A(\theta)=\alpha+\beta\cos(2\theta)+\gamma\sin(2\theta)\ \ \text{ and }\ \ B(\theta)=\sin\theta.
\end{equation*}
They are smooth and $2\pi$-periodic. We verify the three conditions of Theorem~\ref{criterion} in order.
\begin{enumerate}
\item[(i)] The trigonometric identities give
$A(\theta+\pi)=A(\theta)$, $B(\theta+\pi)=-B(\theta)$ and $B(\theta)>0$ for $0<\theta<\pi$.
Thus condition \textup{(i)} holds.

\item[(ii)] By $(H)$, $\alpha^2<\beta^2+\gamma^2$, so $A$ has exactly two zeros in a period.
Since $A(0)=A(\pi)=\alpha+\beta>0$, both zeros lie in $(0,\pi)$.
Denote them by $\theta_1<\theta_2$.
Moreover, both zeros are simple, and condition \textup{(ii)} holds.

\item[(iii)] The positivity of $A(0)$ and $A(\pi)$ and the simplicity of the two zeros imply
\begin{equation*}
A<0\quad\text{on }(\theta_1,\theta_2),\qquad
A>0\quad\text{on }(0,\theta_1)\cup(\theta_2,\pi).
\end{equation*}
It therefore suffices to verify the strict convexity required in \textup{(iii.1)}.
The change of independent variable in Theorem~\ref{criterion} is
\begin{equation*}
z=\int_0^\theta B(s)\,ds=1-\cos\theta,\qquad 0<z<2.
\end{equation*}
Using $\cos\theta=1-z$ and $\sin\theta=\sqrt{z(2-z)}$, we obtain
\begin{equation*}
\widehat K(z)=\frac{A(\theta(z))}{B(\theta(z))}
=\frac{\alpha-\beta+2\beta(1-z)^2}{\sqrt{z(2-z)}}+2\gamma(1-z).
\end{equation*}
Differentiating twice gives
\begin{equation*}
\frac{d^2\widehat K}{dz^2}
=\frac{\alpha+3\beta+2\alpha(1-z)^2}{[z(2-z)]^{5/2}}
\geq\frac{3\left(\alpha+\beta\right)}{[z(2-z)]^{5/2}}>0
, \qquad 0<z<2.
\end{equation*}
Thus $\widehat K$ is strictly convex on $(0,2)$, and condition \textup{(iii.1)} holds.
\end{enumerate}
By Theorem~\ref{criterion} we know that \eqref{mainAbel} has at most one positive limit cycle.
If such limit cycle exists, it is hyperbolic and unstable.
\end{proof}

Now we are ready to give the proof of Theorem \ref{mainresult-rigid}.
\begin{proof}[Proof of Theorem \ref{mainresult-rigid}]
By Lemma \ref{lyapunov-center}, Proposition \ref{nonexistence-mainAbel} and Proposition \ref{at most one for H}, 
we know that equation \eqref{mainAbel} has at most one limit cycle.
After the normalization in Proposition \ref{normalform} and, if necessary, the change $\theta\mapsto-\theta$, we may assume $\alpha+\beta\ge0$. 
Outside region (H), Proposition \eqref{nonexistence-mainAbel} shows that no positive limit cycle exists, whereas in region (H), Proposition \ref{at most one for H} gives at most one. 
Hence equation \eqref{mainAbel} has at most one positive limit cycle.

Next, we show that the bound is sharp.  
Fix $\beta>0$ and $\gamma\in\mathbb R$.
Formula \eqref{displacement-mainAbel} shows that the weak focus at the origin
has second Lyapunov quantity of sign $\beta$ when $\alpha=0$.  Hence, for
$\alpha<0$ sufficiently close to zero, a small positive hyperbolic periodic
solution bifurcates from the origin.  Therefore the upper bound is sharp.

Finally, by Proposition \ref{normalform} and the correspondence between the limit cycles of system \eqref{rigid equation1} and positive limit cycles of equation \eqref{Abel1}, 
the conclusion follows.
\end{proof}

\section{Application to Quadratic Systems}
In this section, we apply Theorem~\ref{criterion} to the quadratic planar system \eqref{quadratic-system}.
For system~\eqref{quadratic-system}, set
\begin{equation*}
\begin{split}
h(\theta)&=\cos\theta\,P_2(\cos\theta,\sin\theta)
           +\sin\theta\,Q_2(\cos\theta,\sin\theta),\\
g(\theta)&=\cos\theta\,Q_2(\cos\theta,\sin\theta)
           -\sin\theta\,P_2(\cos\theta,\sin\theta).
\end{split}
\end{equation*}
Using a polar coordinate change and Cherka's change of variable, $x=r\cos\theta$, $y=r\sin\theta$ and $\rho=r/(1+r g(\theta))$, the number of limit cycles of~\eqref{quadratic-system} surrounding the origin is the number of positive limit cycles of the Abel equation (see, e.g., \cite{Lins})
\begin{equation*}
\rho' = A(\theta) \rho^3+B(\theta)\rho^2,
\end{equation*}
where 
\begin{equation*}
A(\theta) =-h(\theta) g(\theta),\quad B(\theta)=h(\theta)-g'(\theta).
\end{equation*}

\subsection{Uniqueness criterion for quadratic systems}

As $B$ is a homogeneous trigonometric polynomial, it satisfies $ B(\theta+\pi)=-B(\theta)$.
In particular, there exists $\theta_0$ such that $B(\theta_0)=0$. Hence, after a translation of the angular variable $\theta$, there is no loss of generality in assuming that
\begin{equation*}
    B(0)=0,\qquad B(\theta)>0 \quad \text{for sufficiently small $\theta>0$ }. 
\end{equation*}

We now prove Theorem~\ref{quadratic-criterion}. 
\begin{proof}[Proof of Theorem~\ref{quadratic-criterion}]
Note that $A,B$ are homogeneous trigonometric polynomials of degrees $6$ and $3$, respectively. In particular, they are smooth, $2\pi$-periodic, and they satisfy
$A(\theta+\pi)=A(\theta)$,
$B(\theta+\pi)=-B(\theta)$.

By assumption (i), $B(\theta)>0$ for $\theta\in(0,\pi)$.
Hence condition (i) of Theorem~\ref{criterion} is satisfied.

\medskip

By assumption (ii), $A$ has exactly two simple zeros
$0<\theta_1<\theta_2<\pi$.
Thus condition (ii) of Theorem~\ref{criterion} also holds.

\medskip

It remains to verify condition (iii) of Theorem~\ref{criterion}. 
Set
\begin{equation*}
z(\theta)=\int_0^\theta B(s)\,ds,
\qquad
K(\theta)=\frac{A(\theta)}{B(\theta)},
\qquad 
\widehat K(z)=K(\theta(z)).
\end{equation*}
By assumption~(i), $z$ is strictly increasing. The identity
$d/dz=B^{-1}d/d\theta$ gives
\begin{equation*}
\frac{d^2\widehat K}{dz^2}
=\frac{B^2A''-ABB''-3BA'B'+3A(B')^2}{B^5}
=\frac{\mathcal C}{B^5}.
\end{equation*}
If $A(0)>0$, then $A>0$ on $(0,\theta_1)\cup(\theta_2,\pi)$,
$A<0$ on $(\theta_1,\theta_2)$, and $\mathcal C>0$.
Hence alternative~(iii.1) of Theorem~\ref{criterion} applies.
If $A(0)<0$, all these signs are reversed and
alternative~(iii.2) applies.
The conclusion follows from Theorem~\ref{criterion} and the preceding
correspondence.
\end{proof}

{Next, we will rewrite conditions of Theorem~\ref{quadratic-criterion} in terms of the coefficients. We show that they can be reduced to a semialgebraic system in terms of the parameters of the system.}
Write
\begin{equation}\label{jutiquadratic}
\begin{cases}
&\dot{x}=P_2(x,y)=p_0x^2+p_1xy+p_2y^2,\\
&\dot{v}=Q_2(x,y)=q_0x^2+q_1xy+q_2y^2.
\end{cases}
\end{equation}
By direct computation, we have
\begin{equation*}
\begin{split}
    B(\theta)=&(2p_0-q_1)\cos^3 \theta+(3p_1+4q_0-2q_2)\cos^2 \theta\sin \theta\\
&+(4p_2-2p_0+3q_1)\cos \theta\sin^2 \theta+(2q_2-p_1)\sin^3 \theta.
\end{split}
\end{equation*}
Choosing a zero of $B$ as the angular origin gives $q_1=2p_0$.
Moreover, $B(\frac{\pi}{2})=2q_{2}-p_{1}$, so rotating by $\pi$ if necessary, we may assume that $p_1<2q_{2}$. 
With $u=\cot\theta$, $0<\theta<\pi$, define
\begin{equation}\label{eq:quadratic-polynomials}
\begin{split}
P_B(u)&=(3p_1+4q_0-2q_2)u^2+4(p_0+p_2)u+2q_2-p_1,\\
H(u)&=p_0u^3+(p_1+q_0)u^2+(p_2+2p_0)u+q_2,\\
G(u)&=q_0u^3+p_0u^2+(q_2-p_1)u-p_2.
\end{split}
\end{equation}
Then
\begin{equation}\label{eq:quadratic-rational}
B=\frac{P_B(u)}{(1+u^2)^{3/2}},\qquad
h=\frac{H(u)}{(1+u^2)^{3/2}},\qquad
g=\frac{G(u)}{(1+u^2)^{3/2}}.
\end{equation}
Moreover, if the leading coefficients of the polynomials in \eqref{eq:quadratic-polynomials} are nonzero, 
their discriminants can be written as
\begin{equation*}
    \begin{split}
        &\Delta_{P_B} = 16(p_0+p_2)^2 - 4(3p_1+4q_0-2q_2)(2q_2-p_1),\\
        &\Delta_H = (p_1 + q_0)^2 (p_2 + 2p_0)^2 - 4p_0 (p_2 + 2p_0)^3 - 4(p_1 + q_0)^3 q_2 - 27p_0^2 q_2^2 + 18p_0 (p_1 + q_0) (p_2 + 2p_0) q_2,\\
        &\Delta_G = p_0^2 (q_2 - p_1)^2 - 4q_0 (q_2 - p_1)^3 + 4p_0^3 p_2 - 27q_0^2 p_2^2 - 18q_0 p_0 (q_2 - p_1) p_2.
    \end{split}
\end{equation*}
The next proposition gives coefficient tests without expanding the
full convexity polynomial.

\begin{proposition}\label{prop:quadratic}
    Assume that $q_1=2p_0$, $2q_{2}-p_1>0$ and the following conditions hold:
    \begin{enumerate}
    \item[(i)] $\Delta_{P_B}<0$;
    \item[(ii)] $p_0q_0\neq0$ and $\Delta_H<0$, $\Delta_G<0$;
    \item[(iii)] $p_0q_0(P_BL'-3P_B'L)>0$ for every $u\in\mathbb R$, where $L(u)=(1+u^2)\bigl(P_B(HG)'-P_B'HG\bigr)-3uP_BHG$.
    \end{enumerate}
    Then system \eqref{jutiquadratic} has at most one limit cycle surrounding the origin. 
\end{proposition}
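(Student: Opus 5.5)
The plan is to verify the three hypotheses of Theorem~\ref{quadratic-criterion} by passing to the variable $u=\cot\theta$. This substitution maps $(0,\pi)$ diffeomorphically and decreasingly onto $\mathbb R$. Since $(1+u^2)^{3/2}>0$, the representation \eqref{eq:quadratic-rational} shows that $B$, $h$, $g$ have the signs of $P_B$, $H$, $G$. The normalization $q_1=2p_0$ gives $B(0)=0$, and hence $B(\theta+\pi)=-B(\theta)$ yields $B(\pi)=0$ as well.

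\textbf{Hypothesis (i).} By (i), $P_B$ has no real roots; if its leading coefficient vanished, I would treat that degenerate case separately or exclude it via the discriminant convention. Then $P_B$ has constant sign on $\mathbb R$, equal to the sign of its value $2q_2-p_1>0$ at $u=0$, i.e.\ at $\theta=\pi/2$. Hence $B>0$ on $(0,\pi)$.

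\textbf{Hypothesis (ii).} Since $A=-hg$, we have
\[
A=-\frac{H(u)G(u)}{(1+u^2)^{3}} .
\]
Because $p_0q_0\neq0$, both $H$ and $G$ are genuine cubics. The conditions $\Delta_H<0$ and $\Delta_G<0$ mean each has exactly one real root, and it is simple.
\begin{itemize}
\item I must check that these roots differ. If they coincided, $A$ would have a double zero; presumably this is excluded by (iii), since $P_BL'-3P_B'L$ would then vanish there. I would verify this, or else argue that a common root forces a contradiction with the strict inequality in (iii).
\item Granting distinct roots, $A$ has exactly two simple zeros in $(0,\pi)$.
\item The endpoints $\theta=0,\pi$ correspond to $u\to\pm\infty$. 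There $A(0)=-h(0)g(0)$, and since $h(0)=P_2(1,0)=p_0$ and $g(0)=Q_2(1,0)=q_0$, we get $A(0)=-p_0q_0\neq0$. So there are no zeros in $[0,\pi]$ beyond the two interior ones.
\end{itemize}

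\textbf{Hypothesis (iii).} This is the main obstacle: rewriting $\mathcal C$ in the variable $u$. Using $d/d\theta=-(1+u^2)\,d/du$, $K=A/B=-HG/\bigl((1+u^2)^{3/2}P_B\bigr)$, and $dz=B\,d\theta$, we have
\[
\frac{d}{dz}=-\frac{(1+u^2)^{5/2}}{P_B}\,\frac{d}{du}.
\]
I would compute $d\widehat K/dz$ first. The goal is to show
\[
\frac{d\widehat K}{dz}=\frac{\kappa_1\,L}{P_B^{3}}
\]
for some explicit positive factor $\kappa_1$, a power of $(1+u^2)$, with $L$ as defined in the statement. Differentiating once more should give
\[
\frac{d^2\widehat K}{dz^2}=\frac{\kappa_2\,(P_BL'-3P_B'L)}{P_B^{5}},
\]
up to a sign and a positive power of $(1+u^2)$; the $-3P_B'L$ term is exactly what appears when differentiating $L/P_B^3$. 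Since $P_B>0$, the sign of $d^2\widehat K/dz^2$ is the sign of $\pm(P_BL'-3P_B'L)$. Condition (iii) then says it agrees with the sign of $-p_0q_0=A(0)$, up to the sign bookkeeping, which I must track carefully. Equivalently, $A(0)\,\mathcal C>0$ on $(0,\pi)$, using $\mathcal C=B^5\,d^2\widehat K/dz^2$ from the proof of Theorem~\ref{quadratic-criterion}.

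The sign verification in (iii) is the delicate step. Once it is done, Theorem~\ref{quadratic-criterion} applies and yields at most one limit cycle surrounding the origin.
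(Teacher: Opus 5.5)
Your overall route is the paper's: pass to $u=\cot\theta$, read the signs of $B$ and $A$ off $P_B$, $H$, $G$, and convert $\mathcal C$ into $P_BL'-3P_B'L$. The step you flagged as delicate works out. One gets exactly $d\widehat K/dz=L/P_B^{3}$ and $d^2\widehat K/dz^2=-(1+u^2)^{5/2}(P_BL'-3P_B'L)/P_B^{5}$. Hence $\mathcal C=-(P_BL'-3P_B'L)/(1+u^2)^5$ and $A(0)\,\mathcal C=p_0q_0(P_BL'-3P_B'L)/(1+u^2)^5$. The degenerate leading coefficient of $P_B$ is excluded automatically, since $\Delta_{P_B}<0$ forces $(3p_1+4q_0-2q_2)(2q_2-p_1)>4(p_0+p_2)^2\ge 0$.

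The genuine gap is the case where $H$ and $G$ share their unique real root $u_0$. Your proposed fix, that (iii) rules this out because $P_BL'-3P_B'L$ would vanish there, is false. At such a point $HG$ and $(HG)'$ vanish, so $L(u_0)=0$, but
\[
\bigl(P_BL'-3P_B'L\bigr)(u_0)=(1+u_0^2)P_B(u_0)^2\,(HG)''(u_0)=2(1+u_0^2)P_B(u_0)^2H'(u_0)G'(u_0)\neq 0 .
\]
A cubic with a single simple real root crosses it in the direction of its leading coefficient, so $\sgn H'(u_0)=\sgn p_0$ and $\sgn G'(u_0)=\sgn q_0$. Thus (iii) actually holds at $u_0$.

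Equivalently, at a double zero $\theta_0$ of $A$ one has $\mathcal C(\theta_0)=B(\theta_0)^2A''(\theta_0)$, which has the sign of $A(0)$. A strictly convex $\widehat K$ touching zero tangentially is perfectly compatible with (iii). In this case $A$ has a single double zero in $(0,\pi)$, so hypothesis (ii) of Theorem~\ref{quadratic-criterion} fails, and nothing in your argument covers it.

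You need a separate argument for this case. Here $A$ does not change sign and is not identically zero. The classical fixed-sign criteria (Pliss, Gasull--Llibre, or \'Alvarez--Gasull--Giacomini with $(\lambda,\mu)=(1,0)$) then give at most one positive limit cycle of the Abel equation. The paper instead cites a result that the quadratic system has no limit cycles in this situation. With this case added, your proof is complete.
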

\begin{proof}
By \eqref{eq:quadratic-rational}, the sign of $B(\theta)$ on
$(0,\pi)$ is the same as that of $P_B(u)$ on $\mathbb R$.
Since $P_B(0)=2q_2-p_1>0$
and $\Delta_{P_B}<0$, the polynomial $P_B$ has no real zeros.
Hence $B(\theta)>0$ for $\theta\in(0,\pi)$.

Since $p_0q_0\neq0$, both $H$ and $G$ are cubic polynomials.
Moreover, $\Delta_H<0$ and $\Delta_G<0$ imply that each of them
has exactly one simple real zero. Let these zeros be $u_H$ and
$u_G$, respectively.
Since $u=\cot\theta$
maps $(0,\pi)$ bijectively onto $\mathbb R$, and
\begin{equation*}
A(\theta)=-h(\theta)g(\theta)
=-\frac{H(u)G(u)}{(1+u^2)^3},
\end{equation*}
if $u_H\neq u_G$, then $A(\theta)$ has exactly two simple zeros in $(0,\pi)$, and
if $u_H = u_G$, then $A(\theta)$ has a constant sign on $(0,\pi)$ and is not identically zero.
It is already known from \cite{BravoEtAl2019} that if $A(\theta)$ has a constant sign  system~\eqref{jutiquadratic} has no limit cycles.

It remains to verify the convexity condition in
Theorem~\ref{quadratic-criterion}. 
Using $\frac{du}{d\theta}=-(1+u^2)$
and \eqref{eq:quadratic-rational}, a direct computation gives
\begin{equation*}
\mathcal C(\theta)
=
-\frac{P_B(u)L'(u)-3P_B'(u)L(u)}
{(1+u^2)^5}.
\end{equation*}
On the other hand,
$A(0)=-h(0)g(0)=-p_0q_0$.
Therefore,
\begin{equation*}
A(0)\mathcal C(\theta)
=
\frac{
p_0q_0\bigl(P_B(u)L'(u)-3P_B'(u)L(u)\bigr)
}
{(1+u^2)^5}.
\end{equation*}
By assumption~(iii),
$A(0)\mathcal C(\theta)>0$ for $\theta\in(0,\pi)$.
Thus all the hypotheses of Theorem~\ref{quadratic-criterion} are
satisfied, and system~\eqref{jutiquadratic} has at most one limit
cycle surrounding the origin.

The conclusion follows.
\end{proof}

\subsection{The two-parameter family}
In this subsection, we prove corollary~\ref{mainresult-quadratic} for the two-parameter family of quadratic system~\eqref{explicit-quadratic}. Recall that the number of limit cycles of~\eqref{explicit-quadratic} that surround the origin is the number of positive limit cycles for the corresponding Abel equation. Moreover, it could be proved (though as it is a long technical proof not related with the objectives of the paper it is not included) that under the hypotheses of corollary~\ref{mainresult-quadratic}, the system~\eqref{explicit-quadratic} has exactly two critical points, the origin, which is of center type and another one that is a saddle. As it is known that each limit cycle of a quadratic system surrounds exactly one critical point, which must be of focus type (see, e.g., \cite{Coppel1966}), system~\eqref{explicit-quadratic} with $|a|\leq 1/3$ and $1\leq b\leq 3/2$ has exactly one limit cycle.

\medskip 

With the notation of previous subsection, we have
\[
p_0=a,\quad p_1=2b,\quad p_2 =a,\quad 
q_0=1,\quad q_1=2a,\quad q_2=3b.
\]
In particular, $q_1=2p_0$, and $2q_2>p_1$ for $b>0$, so we are in the hypotheses of Proposition~\eqref{prop:quadratic}. 
Then the polynomials in
\eqref{eq:quadratic-polynomials} are
\begin{equation*}
\begin{split}
H(u)&=au^3+(2b+1)u^2+3au+3b,\\
G(u)&=u^3+au^2+bu-a,\\
P_B(u)&=4(u^2+2au+b).
\end{split}
\end{equation*}
We first give the positivity estimate used to verify convexity.

\begin{lemma}\label{lem:family-positivity}
For $|a|\leq1/3$ and $1\leq b\leq3/2$, define
\begin{equation*}
\begin{split}
R_{a,b}(u)={}&3u^8+20au^7+(42a^2+13b-1)u^6+4a(4a^2+16b-1)u^5\\
&+\bigl((26b+8)a^2+29b^2-11b\bigr)u^4+4a(2b^2+6b-1)u^3\\
&+\bigl((8b-2)a^2-5b^3+37b^2-20b\bigr)u^2\\
&+4ab(5b-8)u+b(4b-b^2-18a^2).
\end{split}
\end{equation*}
Then $R_{a,b}(u)>0$ for every $u\in\mathbb R$.
\end{lemma}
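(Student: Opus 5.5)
The plan is to prove positivity of the degree-eight polynomial $R_{a,b}$ by splitting it into pieces that are manifestly nonnegative on the parameter box $|a|\le 1/3$, $1\le b\le 3/2$. First I would exploit the symmetry $(a,u)\mapsto(-a,-u)$. Every monomial has the form $a^ju^k$ with $j\equiv k \pmod 2$: the odd powers of $u$ carry odd powers of $a$, and the even powers of $u$ carry even powers of $a$. Hence $R_{-a,b}(-u)=R_{a,b}(u)$, and it suffices to treat $0\le a\le 1/3$.

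Next I would group the terms as a quadratic form in the vector $(u^4,u^3,u^2,u,1)$, that is, write $R_{a,b}(u)=X^{T}M(a,b)X$ with $X=(u^4,u^3,u^2,u,1)^{T}$. The leading coefficient $3>0$. The constant term $b(4b-b^2-18a^2)$ is bounded below by $b(4b-b^2-2)\ge b(\,4-1-2)>0$ at $b=1$. At $b=3/2$ it is $\tfrac32(6-\tfrac94-2)=\tfrac{21}{8}>0$. Since $4b-b^2$ is increasing on $[1,2]$, the constant term is positive throughout.

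The concrete decomposition I would attempt is a sum of squares plus positive remainders of the form
\[
R_{a,b}(u)=3\Bigl(u^4+\tfrac{10a}{3}u^3+c_2u^2+c_1u+c_0\Bigr)^2+\text{(lower-degree nonnegative part)}.
\]
The coefficients $c_i$ would be chosen to cancel the $u^7$ term and to absorb most of the $a$-odd terms. Whatever remains should be a quartic (or a quadratic in $u^2$ plus small odd terms) whose positivity follows from discriminant-type inequalities.

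The odd terms $4a(\cdots)u^5$, $4a(2b^2+6b-1)u^3$ and $4ab(5b-8)u$ are all of size at most $O(a)\le O(1/3)$. At $a=0$ the polynomial reduces to the even polynomial
\[
3u^8+(13b-1)u^6+(29b^2-11b)u^4+(-5b^3+37b^2-20b)u^2+b(4b-b^2).
\]
For $b\in[1,3/2]$ every coefficient of this even polynomial is positive: for example $-5b^3+37b^2-20b\ge 12>0$ at $b=1$, and it remains positive at $b=3/2$. This gives a comfortable margin. I would then bound each odd term using AM--GM estimates of the form $|2\,a\,c\,u^{2k+1}|\le \lambda u^{2k+2}+\lambda^{-1}a^2c^2u^{2k}$, with weights $\lambda$ chosen so that the even coefficients, reduced by these amounts, stay positive uniformly for $a\le 1/3$ and $b\in[1,3/2]$. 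As a safer alternative, I would certify nonnegativity by checking positive semidefiniteness of the Gram matrix $M(a,b)$ at the corners of the box and using convexity-in-parameters arguments where possible.

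The main obstacle I expect is the middle range, namely the $u^3$ and $u$ terms when $a$ is near $1/3$. There the coefficient $4ab(5b-8)$ is negative and the constant term is smallest, so the AM--GM splitting between the $u^2$ and constant coefficients may be tight. If the crude weights fail, I would fall back on Sturm sequences or resultant arguments. I would show that the discriminant of $R_{a,b}$ with respect to $u$ does not vanish on the box, so the number of real roots is constant there. It would then be enough to check a single parameter point, for example $(a,b)=(0,1)$, where there are no real roots.
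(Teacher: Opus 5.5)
Your main route is the paper's: cancel $20au^7$ by completing a square against $3u^8$, then dominate each remaining odd term by AM--GM against its neighbouring even terms. The gap is that you stop exactly where the proof begins. For an explicit inequality on a parameter box, the entire content is the choice of weights and a check, uniform in $(a,b)$, that what is left of each even coefficient stays positive. You leave this open (``if the crude weights fail\dots''), so as written the lemma is not proved.

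One allocation that works is the paper's. Subtract $3u^6\bigl(u+\tfrac{10a}{3}\bigr)^2$ and bound the remaining coefficients of $u^6,u^4,u^2$ below by $12b$, $18b^2$ and $\tfrac{19}{2}b^2$. Then pair the $u^5$ term with $12bu^6+11b^2u^4$, the $u^3$ term with $7b^2u^4+(\tfrac{19}{2}b^2-5)u^2$, and the linear term with $5u^2$ and the constant, checking that each discriminant is negative on the box. The binding corner is $(a,b)=(\pm\tfrac13,1)$: there $|4ab(5b-8)|=4$ and the constant term equals $1$, so the $u^2$ weight reserved for the linear term must exceed $4$. With weight $5$ and the bounds $|b(5b-8)|\le 3$ and $b(4b-b^2-18a^2)\ge 1$, the last discriminant is at most $16-20<0$. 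You need to exhibit and verify numbers like these.

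Neither fallback closes the gap. Checking positive semidefiniteness of a Gram matrix at the four corners does not propagate to the interior. The coefficients of $R_{a,b}$ are cubic, not affine, in $(a,b)$ (they contain $a^3$, $a^2b$, $b^3$), so the coefficient vectors over the box need not lie in the convex hull of the corner ones. Treating $b^2,b^3,a^2b$ as independent variables to restore linearity already makes the constant term negative.

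The discriminant argument fails outright. At your base point $(0,1)$, which lies in the box,
$$R_{0,1}(u)=3u^8+12u^6+18u^4+12u^2+3=3(1+u^2)^4,$$
so $\mathrm{disc}_u R_{a,b}$ vanishes there. What the continuity argument actually needs is that $R_{a,b}$ and $R_{a,b}'$ have no common \emph{real} zero for $(a,b)$ in the box, and certifying that is no easier than the lemma itself. The identity $R_{0,1}=3(1+u^2)^4$ does make your square-completion ansatz natural, with $c_2\approx 2$, $c_1\approx 0$, $c_0\approx 1$. But it too would have to be carried out with explicit, uniformly verified remainders.
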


\begin{proof}
Since $a^2\leq 1/9$ and $1\leq b\leq 3/2$, we have
\begin{equation*}
\frac{26}{3}a^2+13b-1\geq12b,
\end{equation*}
\begin{equation*}
(26b+8)a^2+29b^2-11b\geq18b^2,
\end{equation*}
and
\begin{equation*}
(8b-2)a^2-5b^3+37b^2-20b
\geq\frac{19}{2}b^2.
\end{equation*}
Hence, completing a square in the three highest-degree terms gives
\begin{align*}
R_{a,b}(u)\geq{}&
3u^6\left(u+\frac{10a}{3}\right)^2\\
&+u^4\left[
12bu^2+4a(4a^2+16b-1)u+11b^2
\right]\\
&+u^2\left[
7b^2u^2+4a(2b^2+6b-1)u+\frac{19}{2}b^2
\right]\\
&+4ab(5b-8)u+b(4b-b^2-18a^2).
\end{align*}
We next show that the two quadratic expressions in brackets are
positive. Since
$4a^2+16b-1<16b$,
the discriminant of the first one satisfies
\begin{align*}
16a^2(4a^2+16b-1)^2-528b^3<
\frac{4096}{9}b^2-528b^3
\leq-\frac{656}{9}b^2<0.
\end{align*}
Therefore,
$12bu^2+4a(4a^2+16b-1)u+11b^2>0$.
For the second one, we claim that
\begin{equation*}
7b^2u^2+4a(2b^2+6b-1)u+\frac{19}{2}b^2>5.
\end{equation*}
Indeed, since $2b^2+6b-1\leq8b^2$,
the discriminant of $7b^2u^2+4a(2b^2+6b-1)u+\frac{19}{2}b^2-5$
satisfies
\begin{align*}
&16a^2(2b^2+6b-1)^2
-28b^2\left(\frac{19}{2}b^2-5\right)\\
&\leq
\frac{1024}{9}b^4-266b^4+140b^2=
b^2\left(140-\frac{1370}{9}b^2\right)
\leq-\frac{110}{9}b^2<0.
\end{align*}
Thus,
$$
R_{a,b}(u)>
5u^2+4ab(5b-8)u+b(4b-b^2-18a^2).$$
Finally, $
b(4b-b^2-18a^2)\geq1$,
and, since $1\leq b\leq3/2$, $b(8-5b)\leq3$.
It follows that the discriminant of the quadratic polynomial on the
right-hand side satisfies
\begin{align*}
16a^2b^2(5b-8)^2
-20b(4b-b^2-18a^2)\leq16-20=-4<0.
\end{align*}
Therefore,
\begin{equation*}
5u^2+4ab(5b-8)u+b(4b-b^2-18a^2)>0
\end{equation*}
for every $u\in\mathbb R$. Consequently,
$
R_{a,b}(u)>0$ for $u\in\mathbb R$.
\end{proof}


Now, we prove Corollary~\ref{mainresult-quadratic}.
\begin{proof}[Proof of Corollary~\ref{mainresult-quadratic}]

We first consider $a=0$. In this case system~\eqref{explicit-quadratic}
is reversible with respect to the $x$-axis, that is, it is invariant
under $(x,y,t)\longmapsto (x,-y,-t)$.
Since the linearization at the origin has eigenvalues $\pm i$, the
origin is a center. Thus, it remains to consider $a\neq0$.

We verify the hypotheses of {Proposition~\ref{prop:quadratic}}. 

\begin{enumerate}
\item[(i)] First,
\begin{equation*}
\Delta_{P_B}=64(a^2-b)<0,
\end{equation*}
because $a^2\leq1/9$ and $b\geq1$.

\item[(ii)] Next, $p_0q_0=a\neq0$. For $H$, a direct computation gives
\begin{equation*}
\Delta_H
=
-108a^4+(117b^2+198b+9)a^2
-12b(2b+1)^3.
\end{equation*}
Since $a^2\leq1/9$ and $b\geq1$,
$
(117b^2+198b+9)a^2
\leq13b^2+22b+1
\leq36b^2.
$
Therefore,
\begin{equation*}
\Delta_H
\leq
36b^2-12b(2b+1)^3<0.
\end{equation*}
On the other hand,
\begin{equation*}
G'(u)
=
3u^2+2au+b
=
3\left(u+\frac{a}{3}\right)^2
+b-\frac{a^2}{3}>0.
\end{equation*}
Hence $G$ has exactly one simple real zero, and consequently
$\Delta_G<0$.

\item[(iii)] Finally, a direct computation gives
\begin{equation*}
P_B(u)L'(u)-3P_B'(u)L(u)
=
16a(1+u^2)R_{a,b}(u).
\end{equation*}
By the Lemma~\ref{lem:family-positivity},
$R_{a,b}(u)>0$ for every $u\in\mathbb R$.
It follows that
\begin{equation*}
p_0q_0\bigl(P_BL'-3P_B'L\bigr)
=
16a^2(1+u^2)R_{a,b}(u)>0
\end{equation*}
for every $u\in\mathbb R$. Therefore, all the hypotheses of the
preceding proposition are satisfied, and
system~\eqref{explicit-quadratic} has at most one limit cycle
surrounding the origin.

\end{enumerate}

\medskip 

We also determine its stability. Since $H(0)=3b>0$ and the leading
coefficient of $H$ is $a$, the unique real zero of $H$ has sign
opposite to that of $a$. Since $G$ is strictly increasing and
$G(0)=-a$, its unique real zero has the same sign as $a$.
Thus, the two zeros are distinct, and $A$ has exactly two simple
zeros in $(0,\pi)$. Moreover,
$A(0)=-h(0)g(0)=-a$.
Hence Theorem~\ref{quadratic-criterion} shows that the limit cycle,
if it exists, is stable for $a>0$ and unstable for $a<0$.

It remains to prove that the upper bound is sharp. Fix
$a$ with $0<|a|\leq1/3$, and let
$\Pi(s;a,b)$ denote the Poincar\'e return map of the associated
Abel equation near $s=0$. A standard recursive expansion of the
return map gives
\begin{equation*}
    \Pi(s;a,b)-s
=\pi a(b-1)s^3
+\frac{4\pi a}{3}(b-1)(2b+1)s^4+\bigl(2\pi a(3a^2-4)+O(b-1)\bigr)s^5
+O(s^6)
\end{equation*}
as $s\to0^{+}$ and $b\to1^{+}$. 
In particular, when $b=1$,
\begin{equation*}
\Pi(s;a,1)-s
=
2\pi a(3a^2-4)s^5+O(s^6).
\end{equation*}
Since $3a^2-4<0$, the origin is a weak focus of order two at
$b=1$.

To describe its bifurcation, write
\begin{equation*}
b=1+\varepsilon^2,\qquad s=\varepsilon w,
\qquad \varepsilon>0.
\end{equation*}
Dividing $\Pi$ by
$\pi a\varepsilon^5w^3$, we obtain
\begin{equation*}
\frac{
\Pi(\varepsilon w;a,1+\varepsilon^2)-\varepsilon w
}{
\pi a\varepsilon^5w^3
}
=
1-2(4-3a^2)w^2+O(\varepsilon).
\end{equation*}
At $\varepsilon=0$, the equation for a nonzero fixed point reduces to
$1-2(4-3a^2)w^2=0$,
which has the positive simple root
$w_0=\frac{1}{\sqrt{2(4-3a^2)}}$.
Since
\begin{equation*}
\left.
\frac{\partial}{\partial w}
\bigl(1-2(4-3a^2)w^2\bigr)
\right|_{w=w_0}
\neq0,
\end{equation*}
the implicit function theorem yields a positive solution
$w=w(\varepsilon)$ for all sufficiently small
$\varepsilon>0$. Hence, for every fixed
$0<|a|\leq1/3$, system~\eqref{explicit-quadratic} has a
small-amplitude limit cycle whenever $b>1$ is sufficiently close
to $1$. Its intersection with the chosen transversal satisfies
\begin{equation*}
s
=
\sqrt{\frac{b-1}{2(4-3a^2)}}+O(b-1).
\end{equation*}
By the uniqueness already proved, this limit cycle is the unique
one surrounding the origin. Therefore, the upper bound is sharp.
\end{proof}

\section*{Acknowledgements}
The first author is partially supported by the project PID2023-151974NB-I00 funded by MICIU/
AEI/10.13039/ 501100011033/FEDER, UE, and by Junta de Extremadura through project GR24042, partially funded by the European Regional Development Fund (ERDF) ``A way to make Europe''.

The second and third authors are supported by the NNSF of China (No. 12371183 and No. 124B2006).

\section*{Author Contributions}
\noindent \textbf{José Luis Bravo Trinidad:} Writing-review \& editing, Methodology.\\
\textbf{Renhao Tian:} Writing-review \& editing, Writing-original draft, Methodology.\\
\textbf{Yulin Zhao:} Writing-review \& editing, Validation, Methodology.

\section*{Availability of data and materials}
No datasets were generated or analysed during the current study.

\section*{Conflict of Interests}
The authors declare that they have no conflict of interests regarding the publication of this paper.

\end{document}